\DeclareMathOperator*{\argmin}{arg\,min}
\DeclareMathOperator*{\Minimize}{Minimize:}
\DeclareMathOperator*{\SubjectTo}{Subject\phantom{a}to:}
\DeclareMathOperator*{\supp}{\textsf{supp}}
\newcommand{\norm}[1]{\left\lVert#1\right\rVert} 
\newcommand{\abs}[1]{\left\lvert#1\right\rvert}
\newtheorem{lemma}{\bf{Lemma}}
\newtheorem{definition}{\bf{Definition}}
\newtheorem{proposition}{\bf{Proposition}}
\newtheorem{theorem}{\bf{Theorem}}
\newtheorem{corollary}{\bf{Corollary}}
\newtheorem{remark}{Remark}
\newcolumntype{L}{>{\centering\arraybackslash}m{4cm}}
\begin{document}
%
\title{Resilient Optimal Estimation Using Measurement Prior}
%
%
%

\author{Olugbenga Moses Anubi,~\IEEEmembership{Member,~IEEE,}
        Charalambos Konstantinou,~\IEEEmembership{Member,~IEEE,}
        and~Rodney~Roberts,~\IEEEmembership{Senior Member,~IEEE}
\thanks{The authors are with the Department
of Electrical and Computer Engineering, FAMU-FSU College of Engineering, Tallhassee FL.}%
\thanks{Emails : oanubi@fsu.edu*, ckonstantinou@fsu.edu, rroberts@fsu.edu.}
}

%
%

\markboth{}%
{Shell \MakeLowercase{Anubi \textit{et al.}}: Resilient Optimal Estimation Using Measurement Prior}
%



\maketitle

\begin{abstract}
This paper considers the problem of optimal estimation for linear system with the measurement vector subject to arbitrary corruption by an adversarial agent. This problem is relevant to cyber-physical systems where, due to the tight coupling of physics, communication and computation, a malicious agent is able to exploit multiple inherent vulnerabilities in order to inject stealthy signals into the measurement process. These malicious signals are calculated to serve the attack objectives of causing false situation awareness and/or triggering a sequence of cascading effects leading to an ultimate system failure.  We assume that the attacker can only compromise a portion, but not all, of the measurement channels simultaneously. However, once a channel is compromised, the attacker is free to modify the corresponding measurement arbitrarily. 

Consequently, the problem is formulated as a compressive sensing problem with additional prior-information model. The prior-information considered is a set inclusion constraint on the measurement vector. It is shown that if the prior set satisfies certain conditions, the resulting recovery error bound is much stronger. The approach is applied to the problem of resilient sate estimation of a power system. For this application, Gaussian Process is used to build a prior generative probabilistic regression model from historical data. The resulting Gaussian Process Regression model recursively maps energy market information to \emph{iid} Gaussian distributions on the relevant system measurements. An optimization-based resilient state estimator is then developed using a re-weighted $\ell_1$-minimization scheme. The developed algorithm is evaluated through a numerical simulation example of the IEEE 14-bus system mapped to the New York Independent System Operator (NYISO) grid data.
\end{abstract}

\begin{IEEEkeywords}
Resilient estimation, Compressive Sensing, Auxiliary models.
\end{IEEEkeywords}

%
\IEEEpeerreviewmaketitle

\section{Notation}\label{s:notation}

The following notions and conventions are employed throughout the paper:
$\mathbb{N}$ denotes the set of natural numbers. $\mathbb{R},\mathbb{R}^m,\mathbb{R}^{m\times n}$  denote the space of real numbers, real vectors of length $m$ and real matrices of $m$ rows and $n$ columns respectively.
$\mathbb{R}_+$ denotes positive real numbers.
$X^\top$ denotes the transpose of the quantity $X$.
Normal-face lower-case letters ($x\in\mathbb{R}$) are used to represent real scalars, bold-face lower-case letter ($\mathbf{x}\in\mathbb{R}^m$) represents vectors, normal-face upper case ($X\in\mathbb{R}^{m\times n}$) represents matrices, while calligraphic upper case letters (e.g $\mathcal{T}$) represent sets. Let $\mathcal{T}\subseteq\{1,\hdots,m\}$ then, for a matrix $X\in\mathbb{R}^{m\times n}$, $X_\mathcal{T} \in\mathbb{R}^{\abs{\mathcal{T}}\times n}$ and $X^{\mathcal{T}} \in\mathbb{R}^{m\times \abs{\mathcal{T}}}$ are the sub-matrices obtained by extracting the rows, and columns respectively, of $X$ corresponding to the indices in $\mathcal{T}$. $\mathcal{N}(X)$, $\mathcal{R}(X)$ and $\overline{\sigma}(X)$ denote the null space, range space and the largest singular value of the matrix $X$ respectively. For a vector $\mathbf{x}$, $\mathbf{x}_i$ denotes its $i$th element. The support of a vector $\mathbf{x}\in\mathbb{R}^m$ is denoted by $\supp(\mathbf{x})\triangleq\left\{i\hspace{1mm}:\hspace{1mm}\mathbf{x}_i\neq0\right\}$, with $\abs{\supp(\mathbf{x})}\le m$ being the number of nonzero elements of $\mathbf{x}$. $\mathcal{S}_k^m\triangleq\left\{\mathbf{x}\in\mathbb{R}^m\setminus \{0\}\hspace{1mm}:\hspace{1mm}\abs{\supp(\mathbf{x})}\le k\right\}$ denotes the set of all nonzero $k$-sparse vectors. The superscript $m$ is dropped whenever the dimension is clear from context. The $p$-norm of a vector $\mathbf{x}\in\mathbb{R}^m$ is defined as $\norm{\mathbf{x}}_p\triangleq\left(\sum\limits_{i=1}^m\abs{\mathbf{x}_i}^p\right)^{\frac{1}{p}}$. Given a vector $\mathbf{x}\in\mathbb{R}^m$, the following inequality about vector norms 
\begin{align*}
    \norm{\mathbf{x}}_q\le\norm{\mathbf{x}}_p\le m^{\left(\frac{1}{p}-\frac{1}{q}\right)}\norm{\mathbf{x}}_q,\hspace{2mm} 0<p\le q\le\infty
\end{align*}
is useful for some results down the line.
Given a positive scalar $\delta\in\mathbb{R}_+$, a saturation function $\textsf{sat}_\delta:\mathbb{R}\mapsto[-\delta,\hspace{1mm}\delta]$ is given by
\begin{align*}
    \textsf{sat}_\delta(x) = \left\{\begin{array}{rcl}-\delta&\text{if}&x<-\delta\\x&\text{if}&\abs{x}\le\delta\\\delta&\text{if}&x>\delta\end{array}\right.
\end{align*}
A best $k$th term approximation of a vector $\mathbf{e}\in\mathbb{R}^m$ is denoted by $\mathbf{e}[k] \triangleq\min\limits_{\norm{\mathbf{f}}_0=k}\norm{\mathbf{e}-\mathbf{f}}_1$ .
\section{Introduction}

Cyber-physical systems (CPS) refer to a generation of systems with tightly-integrated communication, computational and physical capabilities that can interact with humans through many new modalities \cite{gill2008vision,baheti2011cyber}. Such systems are fundamental to the operation of various safety-critical applications (e.g smart grid, connected \& autonomous vehicles (CAV), etc). Their failure can cause irreversible damage to the underlying physical system as well as to the humans who operate it or depend on it. For example, critical infrastructure domains are composed of a multitude of CPS of various scales and at all levels. The control of CPS is enabled by the proliferation of sensing devices which allow geographically isolated physical plants to be remotely monitored. Field embedded devices, typically called remote terminal units (RTUs), deployed in large-scale, geographically-sparse CPS collect measurements related to the physical process. The measured data are sent via supervisory control and data acquisition (SCADA) systems to central master stations. At the central site, the information from RTUs is utilized to carry out necessary analysis and control, e.g., determine if a leak has occurred and the level of criticality. A critical function at the management system level is to estimate the state variables of the CPS. These state estimates are then used to adjust the control of the physical space. In power systems, for instance, once the operating state is known, estimates are utilized for energy management system application functions such as optimal flow control, automatic generation control, and contingency analysis. The results of such functions are used in order to take preventive and corrective actions as well as ensure secure and reliable operation of the CPS. Due to the significance of state estimation routines, it is of paramount importance that such algorithms incorporate proper mechanisms for operating resiliently in the event of malicious events \cite{mclaughlin2016cybersecurity}.

Sophisticated attackers who are able to gain unauthorized access to the communication network of a CPS can modify the transmitted measurements to the central control and estimation stations \cite{liu2011false}, thereby causing a false situation awareness or triggering a cascade of events ultimately leading to a system failure. Furthermore, adversaries can hack into the RTUs or even infiltrate secondary channels of the supply chain in order to distort the measurements \cite{konstantinou2016case}. Existing work on the topic has shown that this class of \emph{false data injection attacks (FDIAs)} can bypass \emph{bad data detection (BDD)} schemes and inject errors in the resulting state estimation without being detected \cite{liu2011false, liang2017review, deng2017false, liang20172015}. Such detection methods are residual schemes traditionally based on the largest normalized residual between the obtained measurements and the predicted values from the system estimated states \cite{wu2018bad}. The impact of FDIAs, on power systems for instance, could skew the electricity markets in favor of the attacker or even result in masking the outage of lines and removing the attacked RTUs from the network \cite{liu2015impacts, kosut2010limiting}. Existing work on addressing the vulnerability of FDIAs typically rely on protecting a set of devices (and thus a set of measurements) or verifying each state variable independently. The high computational and deployment cost, as well as the associated risks of these methods, have hampered their feasibility for use in practical real-time systems \cite{liang2017review}. Moreover, estimation techniques developed for specific system configurations \cite{ashok2018online} often exhibit poor resiliency performance, in general, against FDIAs. Therefore, more computationally feasible, adaptive, and real-time implementable resiliency strategies are needed. The design of such estimators need to consider adverse settings in order to reliably estimate CPS state variables. 

Consequently, the attack-resilient state estimation has attracted significant attention in recent literature \cite{cardenas2008research}. While there are numerous work on resilient state estimation, we focus on the ones that are optimization-based -- since our work ultimately depends on solving a convex program. One of the earliest work employing optimization \cite{fawzi2014secure} formulated the resilient estimation problem for an LTI system as a compressive sensing (CS) problem and used standard results\cite{candes2005decoding} from the CS community to create a convex relaxation of the resulting optimization problem. Following that, a number of papers have either modified or extended the framework to include measurement noise\cite{pajic2015attack,yong2015resilient}, time varying attack support \cite{hu2016secure}, robustness considerations \cite{pajic2014robustness} and distributed case \cite{kekatos2012distributed}. There are also numerous applications including but not limited to; power systems \cite{deng2017false}, UAVS \cite{fiore2017secure,hu2016secure}, energy delivery systems\cite{mestha2017cyber}, autonomous vehicles  and networked systems.

In this paper, we build on our previous works on enhancing the \emph{recoverability} of resilient estimators by incorporating prior information, either in form of attack-support estimation\cite{anubi2018robust} or through a more general set inclusion constraint\cite{anubi2019enhanced}. Here, we provide theoretical guarantees of how certain \emph{boundedness} property of the prior information set can improve the reconstruction error bound of the resulting resilient estimator. Unlike the previous work \cite{fawzi2014secure,hu2016secure,chang2018secure} which depend on the Restricted Isometry Property (RIP) \cite{candes2005decoding}, we have derived our results using a related Nullspae Property (NSP) \cite{cohen2009compressed}. The reason for this is given in subsequent sections. Moreover, a numerical example is given in which the developed estimator is applied to the NYISO transmission grid. The prior information generates a likelihood-level ellipsoid constraints on the ``true" measurement vector via a Gaussian Process Regression (GPR) mean and covariance functions of the locational marginal bus prices. This example demonstrates tremendous improvement in resiliency by using readily available auxiliary measurements to corroborate the state estimation process using the proposed scheme.

The remaining of the paper is organized as follows: in Section \ref{s:background} we provide necessary definitions and background for this work. Section \ref{s:methodology} presents the formulation of the estimation problem as well as our proposed solution algorithm for the enhanced state estimator. Experimental details and simulation results are described in Section \ref{s:experiments}. Our concluding remarks are discussed in Section \ref{s:conclusions}.

\section{Background}\label{s:background}
Consider a linear measurement model of the form:

\begin{align}\label{eqn:meas_model}
    \mathbf{y} = H\mathbf{x} + \mathbf{e},
\end{align}
where $H\in\mathbb{R}^{m\times n}$ is a measurement/coding matrix ($m>n$) and $\mathbf{y}\in\mathbb{R}^m$ is a measurement vector corrupted by an arbitrary unknown but sparse error vector $\mathbf{e}\in\mathbb{R}^m$. By sparsity, we mean that $\norm{\mathbf{e}}_0\le q$, for a given $q\le m$. In classical error correction problem \cite{berrou1993near,elias1954error}, the objective is to recover the input vector $\mathbf{x}\in\mathbb{R}^n$, given the corrupt measurement $\mathbf{y}$ and the matrix $H\in\mathbb{R}^{m\times n}$. Consequently an optimal decoder $\mathcal{D}_0:\mathbb{R}^m\mapsto\mathbb{R}^n$ is considered, of the form:
\begin{align}\label{eqn:opt_dec0_1}
    \mathcal{D}_0(\mathbf{y}) = \argmin\limits_{\mathbf{x}\in\mathbb{R}^n}{\norm{\mathbf{y}-H\mathbf{x}}_0}.
\end{align}
Evidently, the existence of such decoder is equivalent to the uniqueness of the underlying index minimization problem.

Suppose, the coding matrix $H$ is full rank. Let 
\begin{align}
    H = QR = \left[\begin{array}{cc}Q_1&Q_2\end{array}\right]\left[\begin{array}{c}R_1\\0\end{array}\right],
\end{align}
be the QR decomposition of $H$, where $Q\in\mathbb{R}^{m\times m}$ is orthogonal, $Q_1\in\mathbb{R}^{m\times n}$, $Q_2\in\mathbb{R}^{m\times(m-n)}$, and $R_1\in\mathbb{R}^{n\times n}$ is a full rank upper triangular matrix. Multiplying the left and right hand sides of \eqref{eqn:meas_model} by $Q_2^\top$, the transformed measurement model becomes:
\begin{align}
    Q_2^\top\mathbf{y} = Q_2^\top\mathbf{e}.
\end{align}
Thus, the optimal decoder $\mathcal{D}_0:\mathbb{R}^m\mapsto\mathbb{R}^n$ is given by
\begin{align}\label{eqn:opt_dec0_2}
    \mathcal{D}_0(\mathbf{y}) = R_1^{-1}Q_1^\top\left(\mathbf{y} - \argmin\limits_{Q_2^\top\left(\mathbf{y}-\mathbf{e}\right)=0}{\norm{e}_0}\right),
\end{align}
which is equivalently related with the compressive sensing problem\cite{candes2005decoding}:
\begin{align}\label{eqn:comp_sens}
\Minimize\limits_{\mathbf{e}}{\left\|\mathbf{e}\right\|_0}\hspace{2mm}\SubjectTo\hspace{2mm}Q_2^\top(\mathbf{y}-\mathbf{e})=0.
\end{align}
Subsequently, we will consider the compressive sensing problem of the form in \eqref{eqn:comp_sens} for analysis purposes, and restrict ourselves to the decoder of the form in \eqref{eqn:opt_dec0_1}( or \eqref{eqn:opt_dec0_2}) for algorithm development. 

The obvious question that arises, then is to determine if there is a unique minimizer of the above index-minimizing optimization problem. The following proposition, adapted from \cite{hayden2016sparse}, gives the condition for the existence of a unique solution to the optimization problem in \eqref{eqn:comp_sens}.

\begin{proposition}[Uniqueness]\label{thm:prop_uniquess}
Given $k\in\mathbb{N}$, if every $2k$ columns of $Q_2^\top$ are linearly independent and there exists at least one $p\le k$ for which $\mathcal{S}_p\cap\left(\mathcal{N}(Q_2^\top)+\mathbf{y}\right)\neq \varnothing$, then the optimization problem in \eqref{eqn:comp_sens} has a unique solution. 
\end{proposition}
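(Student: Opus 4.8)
The plan is to argue by contradiction, after first using the two hypotheses separately: condition (ii) to pin down the optimal sparsity level, and condition (i) to rule out a second minimizer at that level.

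First I would make the feasible set explicit. The constraint $Q_2^\top(\mathbf{y}-\mathbf{e})=0$ is equivalent to $Q_2^\top\mathbf{e}=Q_2^\top\mathbf{y}$, and since $\mathbf{y}$ trivially satisfies this, the feasible set is exactly the affine subspace $\mathcal{N}(Q_2^\top)+\mathbf{y}$, matching the notation in the statement. Condition (ii) then asserts that this affine set contains a point $\mathbf{e}^\circ\in\mathcal{S}_p$ for some $p\le k$, i.e., a feasible vector with $\norm{\mathbf{e}^\circ}_0\le k$. Hence the optimal value of the program in \eqref{eqn:comp_sens} is at most $k$, so every minimizer $\mathbf{e}^\star$ satisfies $\norm{\mathbf{e}^\star}_0\le k$. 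This step is where condition (ii) does all of its work.

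Next I would suppose, toward a contradiction, that $\mathbf{e}_1\neq\mathbf{e}_2$ are two distinct minimizers. Both are feasible and optimal, so $\norm{\mathbf{e}_1}_0,\norm{\mathbf{e}_2}_0\le k$ and $Q_2^\top\mathbf{e}_1=Q_2^\top\mathbf{y}=Q_2^\top\mathbf{e}_2$. Subtracting gives $\mathbf{v}\triangleq\mathbf{e}_1-\mathbf{e}_2\in\mathcal{N}(Q_2^\top)$ with $\mathbf{v}\neq0$. The one piece of bookkeeping here is the support inclusion $\supp(\mathbf{v})\subseteq\supp(\mathbf{e}_1)\cup\supp(\mathbf{e}_2)$, which yields $\norm{\mathbf{v}}_0\le 2k$.

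Finally I would translate the null-space membership into a statement about columns. Writing $\mathcal{T}=\supp(\mathbf{v})$ with $\abs{\mathcal{T}}\le 2k$, the relation $Q_2^\top\mathbf{v}=0$ reads $\sum_{i\in\mathcal{T}}\mathbf{v}_i\,(Q_2^\top)^{\{i\}}=0$ with not all $\mathbf{v}_i$ equal to zero, which is a nontrivial linear dependence among at most $2k$ columns of $Q_2^\top$. Since linear independence of every set of $2k$ columns forces independence of every subset of size $\le 2k$, this contradicts condition (i). Therefore $\mathbf{e}_1=\mathbf{e}_2$ and the minimizer is unique. I do not expect a serious obstacle: the only subtlety is the edge case $\abs{\mathcal{T}}<2k$, handled by the monotonicity of linear independence just noted (equivalently, by padding $\mathcal{T}$ with arbitrary extra indices carrying zero coefficients).
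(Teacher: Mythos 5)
Your proof is correct and follows essentially the same route as the paper's: both arguments reduce uniqueness to the observation that two distinct feasible points of sparsity at most $k$ would differ by a nonzero null-space vector supported on at most $2k$ indices, contradicting the linear independence of every $2k$ columns of $Q_2^\top$. Your reorganization (first bounding the optimal value via condition (ii), then ruling out a second minimizer) is a minor and arguably cleaner presentation of the same idea.
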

\begin{proof}
It suffices to show that, for all $p\le k$, the feasible region $\mathcal{R}_p\triangleq \left\{\mathbf{e}\in\mathbb{R}^m|\norm{\mathbf{e}}_0=p, Q_2^\top\left(\mathbf{e}-\mathbf{y}\right)=0\right\}=\mathcal{S}_p\cap\left(\mathcal{N}(Q_2^\top)+\mathbf{y}\right)$ is a singleton. If this is true, then the result follows from the existence of at least one feasible point for some $p\le k$. To see that $\mathcal{R}_p$ is a singleton, let $\mathbf{e}_1,\mathbf{e}_2\in\mathcal{R}_p$, $\mathbf{e}_1\neq\mathbf{e}_2$, then $Q_2^\top\left(\mathbf{e}_1-\mathbf{e}_2\right)=0$. Since every $2s$ columns of $Q_2^\top$ are linearly independent, then the last equation is true iff $\norm{\mathbf{e}_1-\mathbf{e}_2}_0>2s\Rightarrow \norm{\mathbf{e}_1}_0+\norm{\mathbf{e}_2}_0>2k\Rightarrow p>k$, a contradiction. Thus, $\mathbf{e}_1=\mathbf{e}_2$, implying that $\abs{\mathcal{R}_p}=1 \hspace{2mm} \forall\hspace{2mm} p\le k$
\end{proof}

\begin{corollary} If there exists $p \le m$ such that $\mathcal{S}_{2p}\cap\mathcal{N}(Q_2^\top)=\varnothing$ and $\mathcal{S}_p\cap\left(\mathcal{N}(Q_2^\top)+\mathbf{y}\right)\neq \varnothing$, then the optimization problem in \eqref{eqn:comp_sens} has a unique solution. 
\end{corollary}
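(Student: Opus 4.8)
The plan is to recognize that the first hypothesis of the corollary is nothing more than a restatement of the linear-independence condition in Proposition~\ref{thm:prop_uniquess}, after which the corollary follows by invoking that proposition directly with $k=p$. So the whole argument reduces to translating the null-space condition $\mathcal{S}_{2p}\cap\mathcal{N}(Q_2^\top)=\varnothing$ into a column-rank statement.

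First I would establish the equivalence: $\mathcal{S}_{2p}\cap\mathcal{N}(Q_2^\top)=\varnothing$ holds if and only if every $2p$ columns of $Q_2^\top$ are linearly independent. To see this, observe that for any nonzero $\mathbf{x}\in\mathbb{R}^m$ with $\abs{\supp(\mathbf{x})}\le 2p$, we have $Q_2^\top\mathbf{x}=\sum_{i\in\supp(\mathbf{x})}\mathbf{x}_i\,(Q_2^\top)^{\{i\}}$, a nontrivial linear combination of the columns of $Q_2^\top$ indexed by $\supp(\mathbf{x})$. Thus such an $\mathbf{x}$ lies in $\mathcal{N}(Q_2^\top)$ exactly when those (at most $2p$) columns are linearly dependent. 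Consequently, the intersection $\mathcal{S}_{2p}\cap\mathcal{N}(Q_2^\top)$ is empty precisely when no collection of at most $2p$ columns of $Q_2^\top$ is linearly dependent, i.e.\ every $2p$ columns are linearly independent. I would note in passing that, since $Q_2^\top$ has rank $m-n$ and hence a nontrivial null space, the condition $\mathcal{S}_{2p}\cap\mathcal{N}(Q_2^\top)=\varnothing$ forces $2p\le m-n$, so the independence statement is genuine rather than vacuous.

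With this equivalence in hand, the two hypotheses of the corollary match exactly the two hypotheses of Proposition~\ref{thm:prop_uniquess} under the choice $k=p$: the condition $\mathcal{S}_{2p}\cap\mathcal{N}(Q_2^\top)=\varnothing$ supplies the requirement that every $2k$ columns of $Q_2^\top$ be linearly independent, while the condition $\mathcal{S}_p\cap\left(\mathcal{N}(Q_2^\top)+\mathbf{y}\right)\neq\varnothing$ furnishes a feasible point of sparsity $p\le k$, which is the $p'=p$ instance of the ``there exists $p'\le k$'' clause. Applying the proposition then yields uniqueness of the solution to \eqref{eqn:comp_sens}, completing the proof. Since the argument is purely a reformulation, I do not expect any substantial obstacle; the only step requiring care is the column-dependence/null-space translation above, and making sure the indices are lined up so that the proposition is applied with the correct $k$.
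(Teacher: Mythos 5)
Your proof is correct and takes essentially the same route as the paper: translate the null-space condition $\mathcal{S}_{2p}\cap\mathcal{N}(Q_2^\top)=\varnothing$ into the ``every $2p$ columns of $Q_2^\top$ are linearly independent'' hypothesis of Proposition~\ref{thm:prop_uniquess} and apply that proposition with $k=p$. Your write-up is in fact more careful than the paper's one-line proof, which states only the implication ``independence $\Rightarrow$ empty intersection'' rather than the converse direction actually needed; you establish the full equivalence via the support/column-dependence correspondence.
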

\begin{proof}
The statement ``every $2s$ columns of $Q_2^\top$ are linearly independent" implies that $\mathcal{S}_{2p}\cap\mathcal{N}(Q_2^\top)=\varnothing$ for $p\le k$. Thus the result follows from \eqref{thm:prop_uniquess}.
\end{proof}

The optimization problem in \eqref{eqn:comp_sens}, in most instances, does not lend itself to a solution in polynomial time due to the nonconvexity associated with the index-minimization objective. As a result, it is often replaced with its convex neighbor:
\begin{align}\label{eqn:comp_sens_L1}
\Minimize\limits_{\mathbf{e}}{\left\|\mathbf{e}\right\|_1\hspace{2mm}\SubjectTo\hspace{2mm}Q_2^\top\left(\mathbf{y}-\mathbf{e}\right)=0}.
\end{align}
As a result, naturally, questions arise about how well the this convex relaxation recovers the solution to the original problem, assuming a unique solution exists? For instance, under what condition(s) will the solution of \eqref{eqn:comp_sens_L1} recover the solution of the original problem \eqref{eqn:comp_sens}. This property called \emph{recoverability} has been studied extensively in compressive sensing literature, largely under the umbrella of either the so called \emph{Restricted Isometry Property} (RIP) or the \emph{Null Space Property} (NSP). While other notions have emerged in recent years, the RIP and NSP are the two most common conditions that one imposes on $Q_2^\top$ in order to guarantee recoverability. In what follows, we outline some RIP and NSP-based results that are relevant to this work.

\subsection{RIP-based results}
The RIP was introduced in \cite{candes2005decoding} to establish stable recoverability for the relaxed problem in \eqref{eqn:comp_sens_L1}. Ever since, there have been so many other follow-up results and refinements to the original guarantees published by Candes et. al. In what follow, we provide a tiny portion of existing results, slightly modified or built upon in some cases, that are relevant to this work. 
\begin{definition}[RIP \cite{candes2005decoding}]
A matrix $A$ has the RIP of sparsity $k$ if there exists $0<\delta<1$ such that
\begin{align}
    \left(1-\delta\right)\norm{\mathbf{x}}_2^2\le\norm{A\mathbf{x}}_2^2\le\left(1+\delta\right)\norm{\mathbf{x}}_2^2
\end{align}
for all $\mathbf{x}\in\mathcal{S}_k$. Moreover, the smallest $\delta$ for which the above inequality holds is called the \emph{restricted isometry constant}, and denoted as $\delta_k(A)$.
\end{definition}
The above definition essentially requires that every set of columns with cardinality less that or equal to $k$ behaves like an orthonormal system. The following theorem lists the recovery error due to relaxed convex program above.

\begin{theorem}[\cite{candes2005decoding},\cite{cai2013sparse}]
Let $\mathbf{e}$ be a sparse vector satisfying $Q_2^\top\left(\mathbf{y}-\mathbf{e}\right)=0$ and $\hat{\mathbf{e}}$ be the solution of \eqref{eqn:comp_sens_L1}. If $\displaystyle \delta_{2k}(Q_2^\top)<\frac{1}{\sqrt{2}}$, then 
\begin{align}
    \norm{\hat{\mathbf{e}}-\mathbf{e}}_2\le \frac{2}{\sqrt{k}}\left(\frac{\delta_{2k}+\sqrt{\delta_{2k}\left(\frac{1}{\sqrt{2}}-\delta_{2k}\right)}}{\sqrt{2}\left(\frac{1}{\sqrt{2}}-\delta_{2k}\right)}+1\right)\norm{\mathbf{e}-\mathbf{e}[k]}_1,
\end{align}
where $\mathbf{e}[k]$ is the best $k$-term approximation of $\mathbf{e}$.
\end{theorem}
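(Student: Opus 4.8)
The plan is to run the classical cone-plus-RIP argument: locate the recovery error of the relaxed program \eqref{eqn:comp_sens_L1} inside a restricted cone, then use the restricted isometry on $2k$-sparse vectors to bound its $\ell_2$ norm. Write $A \triangleq Q_2^\top$ and let $\mathbf{h} \triangleq \hat{\mathbf{e}}-\mathbf{e}$ be the recovery error. Since both $\mathbf{e}$ and $\hat{\mathbf{e}}$ satisfy $A(\mathbf{y}-\cdot)=0$, subtracting gives $A\mathbf{h}=0$, i.e.\ $\mathbf{h}\in\mathcal{N}(A)$. Let $T_0\subseteq\{1,\dots,m\}$ index the $k$ largest-in-magnitude entries of $\mathbf{e}$, so that $\norm{\mathbf{e}-\mathbf{e}[k]}_1=\norm{\mathbf{e}_{T_0^c}}_1$, with $T_0^c$ the complement of $T_0$.

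First I would extract the cone condition from optimality. Because $\hat{\mathbf{e}}$ minimizes the $\ell_1$ objective while $\mathbf{e}$ is feasible, $\norm{\mathbf{e}+\mathbf{h}}_1\le\norm{\mathbf{e}}_1$. Splitting both sides over $T_0$ and $T_0^c$ and applying the triangle inequality in the forward and reverse directions yields the standard tube inequality
\[
\norm{\mathbf{h}_{T_0^c}}_1 \le \norm{\mathbf{h}_{T_0}}_1 + 2\norm{\mathbf{e}-\mathbf{e}[k]}_1,
\]
which confines $\mathbf{h}$ to a cone where the energy off the head $T_0$ is controlled by the energy on $T_0$ plus the compressibility defect $2\norm{\mathbf{e}-\mathbf{e}[k]}_1$. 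Next I would convert this $\ell_1$ information into an $\ell_2$ bound. The classical route partitions $T_0^c$ into consecutive blocks $T_1,T_2,\dots$ of size $k$ by decreasing magnitude, controls the tail via the shifting/monotonicity estimate $\sum_{j\ge2}\norm{\mathbf{h}_{T_j}}_2\le k^{-1/2}\norm{\mathbf{h}_{T_0^c}}_1$, and uses $A\mathbf{h}=0$ together with $\delta_{2k}$-isometry on $T_0\cup T_1$ to relate $\norm{\mathbf{h}_{T_0\cup T_1}}_2$ to the tail. To land exactly on the threshold $\delta_{2k}<\tfrac{1}{\sqrt{2}}$ and the stated prefactor, however, I would instead invoke the sharper sparse-representation-of-a-polytope lemma of Cai--Zhang: write the normalized tail as a convex combination of $2k$-sparse vectors so that the orthogonality $\langle A\mathbf{h}_{T_0\cup T_1},A\mathbf{h}\rangle=0$ can be played against the convex-combination terms, producing a quadratic inequality in $\norm{\mathbf{h}_{T_0\cup T_1}}_2$ whose solvability forces the factor $\tfrac{1}{\sqrt{2}}-\delta_{2k}$ into the denominator.

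The main obstacle is the sharp constant, not the recovery statement itself. A direct RIP application to the block decomposition gives recovery under a more conservative hypothesis (e.g.\ $\delta_{2k}<\sqrt{2}-1$), but matching the threshold $\delta_{2k}<\tfrac{1}{\sqrt{2}}$ and the exact coefficient $\tfrac{2}{\sqrt{k}}\bigl(\tfrac{\delta_{2k}+\sqrt{\delta_{2k}(1/\sqrt{2}-\delta_{2k})}}{\sqrt{2}(1/\sqrt{2}-\delta_{2k})}+1\bigr)$ requires the tighter polytope estimate and careful bookkeeping of the cross terms arising from $\langle A\mathbf{h}_{T_0\cup T_1}, \sum_{j\ge2}A\mathbf{h}_{T_j}\rangle$. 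Once $\norm{\mathbf{h}_{T_0\cup T_1}}_2$ is pinned down, I would close the argument by combining it with the tail bound together with the cone condition, yielding $\sum_{j\ge2}\norm{\mathbf{h}_{T_j}}_2\le k^{-1/2}\bigl(\norm{\mathbf{h}_{T_0}}_1+2\norm{\mathbf{e}-\mathbf{e}[k]}_1\bigr)$, and invoking the norm inequality $\norm{\mathbf{h}_{T_0}}_1\le\sqrt{k}\,\norm{\mathbf{h}_{T_0}}_2$ from the Notation section. Since $\norm{\mathbf{h}}_2\le\norm{\mathbf{h}_{T_0\cup T_1}}_2+\sum_{j\ge2}\norm{\mathbf{h}_{T_j}}_2$, substituting the two estimates and absorbing the $\norm{\mathbf{h}_{T_0}}_2$ terms produces the final bound on $\norm{\hat{\mathbf{e}}-\mathbf{e}}_2$ scaled by $\norm{\mathbf{e}-\mathbf{e}[k]}_1$.
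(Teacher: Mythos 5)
The paper itself contains no proof of this theorem --- it is quoted verbatim from the cited references \cite{candes2005decoding} and \cite{cai2013sparse} --- so the only meaningful comparison is against the source, and your outline is precisely the Cai--Zhang argument from the latter: the cone condition $\norm{\mathbf{h}_{T_0^c}}_1 \le \norm{\mathbf{h}_{T_0}}_1 + 2\norm{\mathbf{e}-\mathbf{e}[k]}_1$ extracted from $\ell_1$-optimality, the block decomposition of the tail, and the sparse-representation-of-a-polytope lemma to reach the sharp threshold $\delta_{2k}<\tfrac{1}{\sqrt{2}}$ rather than the weaker $\delta_{2k}<\sqrt{2}-1$ that a naive RIP cross-term estimate yields. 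Your sketch is correct in structure and correctly identifies that the only remaining work is the bookkeeping in the quadratic inequality that produces the exact prefactor; there is no gap in the approach relative to what the paper's citation commits to.
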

\begin{remark}
If $\mathbf{e}\in\mathcal{S}_k$, then $\hat{\mathbf{e}}=\mathbf{e}$. Thus, if $\displaystyle \delta_{2k}(Q_2^\top)<\frac{1}{\sqrt{2}}$ the relaxed program in \eqref{eqn:comp_sens_L1} will recover any $k$-sparse vector $\mathbf{e}\in\mathcal{S}_k$ exactly! 
\end{remark}
\begin{remark}
While the RIP provides very nice theoretical guarantees, computing/numerically verifying the restricted isometry constant is NP-hard. However, for a large class of matrices, the RIP condition holds with overwhelming probability \cite{candes2006stable}.
\end{remark}
For any invertible matrix $U$, the matrix $UA$ share the same nullspace as $A$ but can have dramatically different RIP constants. This, at a first glance, might seem like a major drawback of RIP-based analyses, because the equivalent programs $\left\{\Minimize\limits_{\mathbf{x}}{\left\|\mathbf{x}\right\|_1}\hspace{2mm}\SubjectTo\hspace{2mm}A\mathbf{x}=\mathbf{b}\right\}$ and  $\left\{\Minimize\limits_{\mathbf{x}}{\left\|\mathbf{x}\right\|_1}\hspace{2mm}\SubjectTo\hspace{2mm}UA\mathbf{x}=U\mathbf{b}\right\}$ may end up having totally different RIP-based recoverability properties. To overcome this situation, many researchers have derived their results using subspace-based analysis, which generally \emph{mods} out such transformations and provide a more uniform result. Next, we examine the \emph{nullspace} property, which has been widely used for such purpose.

\subsection{NSP-based results}
The term \emph{nullspace property} originates from \cite{cohen2009compressed}. It gives necessary and sufficient conditions for recoverability. Like RIP, numerical verification of the NSP is combinatorial and NP-hard.

\begin{definition}[$\textsf{NSP}_q$,\cite{chen2012stability}]
A matrix $A$ is said to satisfy the nullspace property with parameters $\gamma\in\mathbb{R}_+$ and $k\in\mathbb{N}$, denoted by $A\in\textsf{NSP}_q(k,\gamma)$, if every nonzero $\mathbf{e}\in\mathcal{N}(A)$ satisfies
\begin{align*}
    \norm{\mathbf{e}_\mathcal{T}}_q<\gamma\norm{\mathbf{e}_{\mathcal{T}^c}}_q
\end{align*}
for all $\mathcal{T}\subset \left\{\begin{array}{ccc}1&\hdots&n\end{array}\right\}$ with $\abs{\mathcal{T}}\le k$.
\end{definition}
The following results list some recoverability results based on the NSP.
\begin{theorem}[\cite{donoho2001uncertainty,gribonval2003sparse}]\label{thm:eqv_prog}
The convex program in \eqref{eqn:comp_sens_L1} uniquely recovers all $k$-sparse vector $\mathbf{e}\in\mathcal{S}_k$ if and only if $Q_2^\top\in\textsf{NSP}_1(k,1)$
\end{theorem}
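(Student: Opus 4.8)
The plan is to prove both directions of the equivalence by the standard feasibility-versus-optimality comparison, exploiting the single structural fact that any two feasible points of \eqref{eqn:comp_sens_L1} differ by an element of $\mathcal{N}(Q_2^\top)$. Write $A=Q_2^\top$ for brevity, and note that the constraint $A(\mathbf{y}-\mathbf{e})=0$ confines $\mathbf{e}$ to the affine set $\mathbf{e}^\star+\mathcal{N}(A)$, where $\mathbf{e}^\star$ is the true $k$-sparse error satisfying $A\mathbf{y}=A\mathbf{e}^\star$. The entire argument then reduces to relating the $\ell_1$ increments of nullspace perturbations across a support set to the NSP inequality.

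For sufficiency ($\Leftarrow$), I would assume $A\in\textsf{NSP}_1(k,1)$, fix a $k$-sparse $\mathbf{e}^\star$ with support $\mathcal{T}$, $\abs{\mathcal{T}}\le k$, and let $\hat{\mathbf{e}}$ be any minimizer. Since $\mathbf{e}^\star$ is itself feasible, optimality gives $\norm{\hat{\mathbf{e}}}_1\le\norm{\mathbf{e}^\star}_1$. Setting $\mathbf{v}=\hat{\mathbf{e}}-\mathbf{e}^\star\in\mathcal{N}(A)$, I split the $\ell_1$ norm of $\hat{\mathbf{e}}=\mathbf{e}^\star+\mathbf{v}$ over $\mathcal{T}$ and $\mathcal{T}^c$; on $\mathcal{T}$ the reverse triangle inequality gives $\norm{\mathbf{e}^\star+\mathbf{v}_\mathcal{T}}_1\ge\norm{\mathbf{e}^\star}_1-\norm{\mathbf{v}_\mathcal{T}}_1$, while on $\mathcal{T}^c$ the term is exactly $\norm{\mathbf{v}_{\mathcal{T}^c}}_1$ because $\mathbf{e}^\star_{\mathcal{T}^c}=0$. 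Combining with optimality yields $\norm{\mathbf{v}_{\mathcal{T}^c}}_1\le\norm{\mathbf{v}_\mathcal{T}}_1$, which directly contradicts the strict NSP bound unless $\mathbf{v}=0$; hence $\hat{\mathbf{e}}=\mathbf{e}^\star$ and recovery is unique.

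For necessity ($\Rightarrow$), I would assume exact recovery of every vector in $\mathcal{S}_k$ and take an arbitrary nonzero $\mathbf{v}\in\mathcal{N}(A)$ together with an arbitrary $\mathcal{T}$, $\abs{\mathcal{T}}\le k$. The construction is to engineer an instance whose ground truth is $\mathbf{v}_\mathcal{T}$: choosing $\mathbf{y}=\mathbf{v}_\mathcal{T}$ makes $\mathbf{v}_\mathcal{T}$ the $k$-sparse ground truth, and $\mathbf{v}_\mathcal{T}-\mathbf{v}=-\mathbf{v}_{\mathcal{T}^c}$ is a competing feasible point precisely because $\mathbf{v}\in\mathcal{N}(A)$. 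Since the two points are distinct ($\mathbf{v}\neq0$), uniqueness of the minimizer forces $\norm{\mathbf{v}_\mathcal{T}}_1<\norm{\mathbf{v}_{\mathcal{T}^c}}_1$, which is exactly the statement $A\in\textsf{NSP}_1(k,1)$.

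The genuinely delicate points are accounting rather than conceptual. First, the strictness of the NSP inequality is what upgrades mere optimality of $\hat{\mathbf{e}}$ into \emph{uniqueness}, so I must keep the strict-versus-weak inequalities from the two directions properly aligned. The main obstacle, such as it is, is the degenerate case $\mathbf{v}_{\mathcal{T}^c}=0$ in the necessity argument, where the competitor collapses to $\mathbf{0}$. I would dispatch it by first observing that unique recovery rules out any nonzero $k$-sparse nullspace vector (otherwise such a $\mathbf{v}$ would be the purported ground truth while $\mathbf{0}$ is a strictly smaller feasible competitor), so $\mathcal{N}(A)\cap\mathcal{S}_k=\varnothing$; this guarantees $\mathbf{v}_{\mathcal{T}^c}\neq0$ whenever $\mathbf{v}\in\mathcal{N}(A)\setminus\{0\}$ and $\abs{\mathcal{T}}\le k$, making the competitor genuinely distinct and the strict inequality meaningful.
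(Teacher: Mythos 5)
Your proof is correct. Note first that the paper itself offers no proof of this theorem---it is stated as a cited result from the literature---so there is nothing internal to compare against; your argument is the standard one found in the cited references and in compressive sensing texts. Both directions are sound: the sufficiency direction correctly combines optimality, the reverse triangle inequality on $\mathcal{T}$, and the strict NSP inequality to force $\mathbf{v}=\hat{\mathbf{e}}-\mathbf{e}^\star=0$; the necessity direction correctly instantiates the ground truth $\mathbf{v}_\mathcal{T}$ against the feasible competitor $-\mathbf{v}_{\mathcal{T}^c}$, and your preliminary observation that unique recovery forces $\mathcal{N}(Q_2^\top)\cap\mathcal{S}_k=\varnothing$ properly disposes of the degenerate case $\mathbf{v}_{\mathcal{T}^c}=0$. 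Two minor bookkeeping points worth making explicit if this were written out: (i) throughout the necessity argument $\mathbf{v}_\mathcal{T}$ must be read as the vector in $\mathbb{R}^m$ that agrees with $\mathbf{v}$ on $\mathcal{T}$ and is zero on $\mathcal{T}^c$ (so that it is a legitimate $k$-sparse ground truth), which is consistent with the norm identities you use; and (ii) since the paper defines $\mathcal{S}_k$ to exclude the zero vector, the sub-case $\mathbf{v}_\mathcal{T}=0$ is not literally covered by the recovery hypothesis, but it is trivially handled because $\mathbf{0}$ is then the unique minimizer of a nonnegative objective and $\mathbf{v}_{\mathcal{T}^c}=\mathbf{v}\neq 0$ gives the required strict inequality $\norm{\mathbf{v}_\mathcal{T}}_1=0<\norm{\mathbf{v}_{\mathcal{T}^c}}_1$ directly.
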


\begin{theorem}\label{thm:NSP}
Let $\mathbf{e}\in\mathbb{R}^m$ be a vector satisfying $Q_2^\top\left(\mathbf{y}-\mathbf{e}\right)=0$ and $\hat{\mathbf{e}}$ be the solution of \eqref{eqn:comp_sens_L1}. If $Q_2^\top\in\textsf{NSP}_q(k,\gamma)$ for some $0<\gamma<1$ and $q>1$, then 
\begin{align}
    \norm{\hat{\mathbf{e}}-\mathbf{e}}_1\le \frac{m}{\sqrt{2}}\left(\frac{4\left(1+\gamma\right)}{m\left(1-\gamma\right)}\right)^{\frac{1}{q}}\norm{\mathbf{e}-\mathbf{e}[k]}_1,
\end{align}
where $\mathbf{e}[k]$ is a best $k$-term approximation of $\mathbf{e}$.
\end{theorem}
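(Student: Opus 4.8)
The plan is to run the classical null-space recovery argument, but carried out so that the $\ell_1$ optimality condition and the $\ell_q$ hypothesis $Q_2^\top\in\textsf{NSP}_q(k,\gamma)$ can be reconciled through the norm-equivalence inequality $\norm{\mathbf{x}}_q\le\norm{\mathbf{x}}_p\le m^{\left(\frac1p-\frac1q\right)}\norm{\mathbf{x}}_q$ recorded in Section \ref{s:notation}. First I would set $\mathbf{h}\triangleq\hat{\mathbf{e}}-\mathbf{e}$ and note that, since both $\hat{\mathbf{e}}$ and $\mathbf{e}$ satisfy $Q_2^\top(\mathbf{y}-\cdot)=0$, subtracting gives $Q_2^\top\mathbf{h}=0$, i.e. $\mathbf{h}\in\mathcal{N}(Q_2^\top)$; this is what puts the NSP hypothesis in play. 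I would then fix $\mathcal{T}$ to be the support of the best $k$-term approximation $\mathbf{e}[k]$, so that $\mathcal{T}$ indexes the $k$ largest-magnitude entries of $\mathbf{e}$ and $\norm{\mathbf{e}_{\mathcal{T}^c}}_1=\norm{\mathbf{e}-\mathbf{e}[k]}_1$.

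The second step is the descent (cone) inequality coming from optimality. Because $\hat{\mathbf{e}}$ minimizes the $\ell_1$ objective in \eqref{eqn:comp_sens_L1} and $\mathbf{e}$ is feasible, $\norm{\hat{\mathbf{e}}}_1\le\norm{\mathbf{e}}_1$. Writing $\hat{\mathbf{e}}=\mathbf{e}+\mathbf{h}$, splitting every $\ell_1$ norm over $\mathcal{T}$ and $\mathcal{T}^c$, and applying the triangle and reverse-triangle inequalities, the $\mathbf{e}_\mathcal{T}$ terms cancel and one is left with the standard cone condition $\norm{\mathbf{h}_{\mathcal{T}^c}}_1\le\norm{\mathbf{h}_\mathcal{T}}_1+2\norm{\mathbf{e}-\mathbf{e}[k]}_1$. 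This confines the error to a cone around the sparse model, exactly as in the $q=1$ theory.

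The third step invokes the hypothesis. Since $\mathbf{h}\in\mathcal{N}(Q_2^\top)$ and $\abs{\mathcal{T}}\le k$, $\textsf{NSP}_q(k,\gamma)$ gives $\norm{\mathbf{h}_\mathcal{T}}_q<\gamma\norm{\mathbf{h}_{\mathcal{T}^c}}_q$. Using disjointness of the supports, $\norm{\mathbf{h}}_q^q=\norm{\mathbf{h}_\mathcal{T}}_q^q+\norm{\mathbf{h}_{\mathcal{T}^c}}_q^q<(1+\gamma^q)\norm{\mathbf{h}_{\mathcal{T}^c}}_q^q$, and $\gamma^q<\gamma$ for $q>1$ bounds the leading constant by $(1+\gamma)$. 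It remains to control $\norm{\mathbf{h}_{\mathcal{T}^c}}_q$ by the model error $\norm{\mathbf{e}-\mathbf{e}[k]}_1$. For this I would translate the $\ell_1$ cone inequality into $\ell_q$ using the norm-equivalence bound, namely $\norm{\mathbf{h}_{\mathcal{T}^c}}_q\le\norm{\mathbf{h}_{\mathcal{T}^c}}_1$ on one side and $\norm{\mathbf{h}_\mathcal{T}}_1\le k^{1-1/q}\norm{\mathbf{h}_\mathcal{T}}_q$ on the other, then substitute the NSP estimate so that the $\norm{\mathbf{h}_\mathcal{T}}$ term is re-expressed through $\norm{\mathbf{h}_{\mathcal{T}^c}}_q$ and collected on the left, producing a $\frac{1}{1-\gamma}$ factor. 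Lifting back with $\norm{\mathbf{h}}_1\le m^{1-1/q}\norm{\mathbf{h}}_q$ then supplies the dimensional prefactor $m^{1-1/q}$ and assembles the stated constant $\frac{m}{\sqrt2}\left(\frac{4(1+\gamma)}{m(1-\gamma)}\right)^{1/q}$.

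The main obstacle, I expect, is precisely this last reconciliation. The optimality condition lives in $\ell_1$ while the hypothesis lives in $\ell_q$, and each time one trades between them a norm-equivalence constant (a power of $k$, or more crudely of $m$) enters and threatens the contraction needed to absorb $\norm{\mathbf{h}_\mathcal{T}}$ back into $\norm{\mathbf{h}_{\mathcal{T}^c}}$. Keeping that constant on the favorable side, so that the absorption step is legitimate and the final constant emerges as the claimed $q$-th-root expression rather than something degenerating in the ambient dimension, is the delicate bookkeeping; checking that the $(1+\gamma)$, the $(1-\gamma)$, and the numerical factors $4$ and $\sqrt2$ fall out in exactly the stated form, rather than merely being bounded crudely, is where the care is required.
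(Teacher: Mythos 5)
Your overall strategy is not the one the paper uses, and as written it has a genuine gap at exactly the point you flag as the ``delicate bookkeeping.'' The paper's proof is essentially two lines: it cites Theorem III.4.1 of \cite{chen2012stability} for the $\ell_q$-native instance-optimality bound $\norm{\hat{\mathbf{e}}-\mathbf{e}}_q\le \frac{1}{\sqrt{2}}\left(\frac{4(1+\gamma)}{1-\gamma}\right)^{1/q}\norm{\mathbf{e}-\mathbf{e}[k]}_q$ and then converts both sides to $\ell_1$ via $\norm{\mathbf{x}}_q\le\norm{\mathbf{x}}_1\le m^{1-1/q}\norm{\mathbf{x}}_q$, which is where the factor $m^{1-1/q}=m\cdot m^{-1/q}$ in the stated constant comes from. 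It never derives a cone condition in $\ell_1$ and then trades norms the way you propose.

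The gap in your route is the absorption step. From the $\ell_1$ cone condition $\norm{\mathbf{h}_{\mathcal{T}^c}}_1\le\norm{\mathbf{h}_\mathcal{T}}_1+2\norm{\mathbf{e}_{\mathcal{T}^c}}_1$ together with $\norm{\mathbf{h}_{\mathcal{T}^c}}_q\le\norm{\mathbf{h}_{\mathcal{T}^c}}_1$, $\norm{\mathbf{h}_\mathcal{T}}_1\le k^{1-1/q}\norm{\mathbf{h}_\mathcal{T}}_q$, and the hypothesis $\norm{\mathbf{h}_\mathcal{T}}_q<\gamma\norm{\mathbf{h}_{\mathcal{T}^c}}_q$, you arrive at
\begin{align*}
\left(1-\gamma k^{1-\frac{1}{q}}\right)\norm{\mathbf{h}_{\mathcal{T}^c}}_q\le 2\norm{\mathbf{e}_{\mathcal{T}^c}}_1 ,
\end{align*}
and the coefficient on the left is only positive when $\gamma<k^{\frac{1}{q}-1}$, a strictly stronger requirement than the assumed $\gamma<1$ whenever $k\ge 2$ and $q>1$. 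So the contraction you need is not available under the theorem's hypotheses, and the argument stalls. Moreover, even if the absorption went through, your assembly would produce a factor $\frac{2}{1-\gamma}$ rather than the $\bigl(\frac{4}{1-\gamma}\bigr)^{1/q}$ and $\frac{1}{\sqrt{2}}$ appearing in the statement; the $q$-th-root structure of the constant is a signature of an argument carried out natively in the $\ell_q$ (quasi-)norm, not of an $\ell_1$ cone condition lifted afterwards. To repair your approach you would either need to derive the cone condition directly in $\ell_q$ (which the $\ell_1$ decoder does not hand you) or, as the paper does, invoke the cited $\ell_q$ result as a black box and only use norm equivalence at the very end.
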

\begin{proof}
From the results in \cite{chen2012stability}(Theorem III.4.1), the following inequality holds:
\begin{align*}
    \norm{\hat{\mathbf{e}}-\mathbf{e}}_q\le \frac{1}{\sqrt{2}}\left(\frac{4\left(1+\gamma\right)}{\left(1-\gamma\right)}\right)^{\frac{1}{q}}\norm{\mathbf{e}-\mathbf{e}[k]}_q.
\end{align*}
The result follows by using the following well-known norm inequality for $q>1$:
\begin{align*}
\norm{x}_q\le\norm{\mathbf{x}}_1\le m^{1-\frac{1}{q}}\norm{\mathbf{x}}_q.
\end{align*}
\end{proof}

\begin{remark}
This result demonstrates how the choice of $q$ in the parameterized \emph{nullspace property} $\textsf{NSP}_q$ can be used to modify the error bound. It is also worth noting that the $\textsf{NSP}_q$ may be quite different for different $q$-s. A nice entity relationship diagram for $\textsf{RIP}$, $\textsf{NSP}$ and coherence is also given in Figure III.2 of \cite{chen2012stability}. It would be nice to see the resulting error bounds change with these quantities laid out on the same diagram, although not pursued for this paper.
\end{remark}
\begin{remark}
It is noteworthy that as $q\rightarrow\infty$, the upper bound in Theorem~\ref{thm:NSP} approaches the uniform bound
\begin{align}
    \norm{\hat{\mathbf{e}}-\mathbf{e}}_1\le \frac{m}{\sqrt{2}}\norm{\mathbf{e}-\mathbf{e}[k]}_1.
\end{align}
\end{remark}
\begin{theorem}[maximum correctable errors]
Suppose that the nonzero vector $\mathbf{e}\in\mathbb{R}^m$ satisfies
\begin{align*}
		\norm{\mathbf{e}_\mathcal{T}}_q<\gamma\norm{\mathbf{e}_{\mathcal{T}^c}}_q,\hspace{2mm}\gamma\in(0,1), q>1
\end{align*}
for all $\mathcal{T}\subset\left\{1,\hdots,m\right\}$ satisfying $\abs{\mathcal{T}}\le k$. Then
\begin{align}
k<\frac{\gamma^q}{1+\gamma^q}m
\end{align}
\end{theorem}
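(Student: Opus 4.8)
The plan is to exploit the freedom in the choice of $\mathcal{T}$ by specializing the hypothesis to the \emph{worst-case} index set, namely the set $\mathcal{T}^\star$ collecting the indices of the $k$ components of $\mathbf{e}$ with largest magnitude. Since $\abs{\mathcal{T}^\star}=k\le k$, the assumed inequality $\norm{\mathbf{e}_{\mathcal{T}^\star}}_q<\gamma\norm{\mathbf{e}_{(\mathcal{T}^\star)^c}}_q$ holds, and raising both (nonnegative) sides to the $q$-th power gives $\norm{\mathbf{e}_{\mathcal{T}^\star}}_q^q<\gamma^q\norm{\mathbf{e}_{(\mathcal{T}^\star)^c}}_q^q$. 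The whole argument then reduces to bounding these two $q$-th powers by a common quantity and counting entries.

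First I would sort the magnitudes as $\abs{\mathbf{e}_{(1)}}\ge\abs{\mathbf{e}_{(2)}}\ge\cdots\ge\abs{\mathbf{e}_{(m)}}$, so that $\mathcal{T}^\star=\{(1),\ldots,(k)\}$ and the $k$-th largest magnitude is $\abs{\mathbf{e}_{(k)}}$. Because every component retained in $\mathcal{T}^\star$ is at least as large as $\abs{\mathbf{e}_{(k)}}$, I obtain the lower bound
\begin{align*}
\norm{\mathbf{e}_{\mathcal{T}^\star}}_q^q=\sum_{i=1}^{k}\abs{\mathbf{e}_{(i)}}^q\ge k\,\abs{\mathbf{e}_{(k)}}^q,
\end{align*}
while each of the remaining $m-k$ components is no larger than $\abs{\mathbf{e}_{(k)}}$, giving the upper bound
\begin{align*}
\norm{\mathbf{e}_{(\mathcal{T}^\star)^c}}_q^q=\sum_{i=k+1}^{m}\abs{\mathbf{e}_{(i)}}^q\le (m-k)\,\abs{\mathbf{e}_{(k)}}^q.
\end{align*}

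Chaining these two bounds through the powered hypothesis yields $k\,\abs{\mathbf{e}_{(k)}}^q\le\norm{\mathbf{e}_{\mathcal{T}^\star}}_q^q<\gamma^q\norm{\mathbf{e}_{(\mathcal{T}^\star)^c}}_q^q\le\gamma^q(m-k)\abs{\mathbf{e}_{(k)}}^q$. Dividing through by $\abs{\mathbf{e}_{(k)}}^q$ leaves the purely combinatorial inequality $k<\gamma^q(m-k)$, which rearranges to $k(1+\gamma^q)<\gamma^q m$, i.e. the claimed bound $k<\frac{\gamma^q}{1+\gamma^q}m$. The one point requiring care — and the only place the nonzero hypothesis on $\mathbf{e}$ enters — is the legitimacy of dividing by $\abs{\mathbf{e}_{(k)}}^q$. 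I would dispose of this by noting that if $\abs{\mathbf{e}_{(k)}}=0$ then $\mathbf{e}_{(\mathcal{T}^\star)^c}=\mathbf{0}$, so the hypothesis would demand $\norm{\mathbf{e}_{\mathcal{T}^\star}}_q<0$, which is impossible; since $\mathbf{e}\neq\mathbf{0}$ forces $\norm{\mathbf{e}_{\mathcal{T}^\star}}_q>0$, we must have $\abs{\mathbf{e}_{(k)}}>0$ and the division is valid. This degenerate-case check is the main (and only minor) obstacle; everything else is the two counting bounds above.
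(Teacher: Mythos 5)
Your proof is correct and takes essentially the same route as the paper's: sort the magnitudes, specialize to the top-$k$ index set, bound the two sides of the powered hypothesis by $k\abs{\mathbf{e}_{(k)}}^q$ and $(m-k)\abs{\mathbf{e}_{(k)}}^q$ respectively, and verify $\abs{\mathbf{e}_{(k)}}\neq 0$ before dividing. The only cosmetic difference is that the paper divides by $\abs{\mathbf{e}_{(k)}}^q$ first and then bounds the resulting ratios, whereas you bound first and divide last.
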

\begin{proof}
Suppose, without loss of generality, that $\abs{\mathbf{e}_1}\ge\hdots\ge\abs{\mathbf{e}_m}$. Then,
\begin{align*}
		\sum\limits_{i=1}^{\abs{\mathcal{T}}}{\abs{\mathbf{e}_i}^q}<\gamma^q\sum\limits_{i=\abs{\mathcal{T}}+1}^{m}{\abs{\mathbf{e}_i}^q}.
\end{align*}
Observe that $\abs{\mathbf{e}_{\abs{\mathcal{T}}}}$, otherwise the right hand side of the above inequality would be zero identically and the strict inequality in the hypothesis could not hold. Next, dividing through by $\abs{\mathbf{e}_{\abs{\mathcal{T}}}}^q$ and observing that
\begin{align*}
\frac{\abs{\mathbf{e}_i}}{\abs{\mathbf{e}_{\abs{\mathcal{T}}}}}\left\{\begin{array}{lc}\ge1&\text{ if }i\le\abs{\mathcal{T}}\\\\\le1&\text{ if }i>\abs{\mathcal{T}}\end{array}\right..
\end{align*}
Thus,
\begin{align*}
		\abs{\mathcal{T}}\le\sum\limits_{i=1}^{\abs{\mathcal{T}}}{\left(\frac{\abs{\mathbf{e}_i}}{\abs{\mathbf{e}_{\abs{\mathcal{T}}}}}\right)^q}<\gamma^q\sum\limits_{i=\abs{\mathcal{T}}+1}^{m}{\left(\frac{\abs{\mathbf{e}_i}}{\abs{\mathbf{e}_{\abs{\mathcal{T}}}}}\right)^q}\le\gamma^q\left(m-\abs{\mathcal{T}}\right).
\end{align*}
Rearranging the terms of $\abs{\mathcal{T}}<\gamma^q\left(m-\abs{\mathcal{T}}\right)$ gives 
\begin{align*}
\abs{\mathcal{T}}<\frac{\gamma^q}{1+\gamma^q}m,
\end{align*}
which gives the desired result for all $\abs{\mathcal{T}}\le k$.
\end{proof}
\begin{remark}
For a given $k$, the result also gives a lower bound on admissible $\gamma$ as 
\begin{align*}
		\gamma>\left(\frac{k}{m-k}\right)^{\frac{1}{q}}.
\end{align*}
\end{remark}
The next result gives numerical sufficient conditions for $Q_2^\top\in\textsf{NSP}_1(k,1)$
\begin{theorem}
Given the unitary matrix $Q\in\mathbb{R}^{m\times m}$
\begin{align*}
Q = \left[\begin{array}{cc}Q_1&Q_2\end{array}\right],
\end{align*}
where $Q_1\in\mathbb{R}^{m\times n}$ and $Q_2\in\mathbb{R}^{m\times(m-n)}$, $n<m$ are orthogonal complements. For any integers $k<\frac{m}{2}$ and $q\ge2$, if
\begin{align}
		\norm{{Q_1}_\mathcal{T}}_q\triangleq\sup\limits_{\mathbf{x}\neq0}{\frac{\norm{{Q_1}_\mathcal{T}\mathbf{x}}_q}{\norm{\mathbf{x}}_q}}<\frac{1}{2}k^{\frac{1}{q}-1},
\end{align}
 for all $\mathcal{T}\subset\{1,2,\hdots,m\}$ with $\abs{\mathcal{T}}\le k$, then $Q_2^\top\in\textsf{NSP}_1(k,1)$. 
\end{theorem}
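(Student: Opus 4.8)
The plan is to rewrite the desired conclusion $Q_2^\top\in\textsf{NSP}_1(k,1)$ as its defining inequality and then exploit the orthogonal-complement structure of $Q$. By definition I must show that every nonzero $\mathbf{e}\in\mathcal{N}(Q_2^\top)$ and every index set $\mathcal{T}$ with $\abs{\mathcal{T}}\le k$ satisfy $\norm{\mathbf{e}_\mathcal{T}}_1<\norm{\mathbf{e}_{\mathcal{T}^c}}_1$. The crucial structural observation is that, since $Q=\left[\begin{array}{cc}Q_1&Q_2\end{array}\right]$ is orthogonal, the columns of $Q_1$ and $Q_2$ span orthogonal complements, so $\mathcal{N}(Q_2^\top)=\mathcal{R}(Q_2)^\perp=\mathcal{R}(Q_1)$. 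Hence any $\mathbf{e}$ in the nullspace can be written as $\mathbf{e}=Q_1\mathbf{z}$ for a unique $\mathbf{z}\in\mathbb{R}^n$ (unique and nonzero because $Q_1$ has full column rank), and the restriction to $\mathcal{T}$ is exactly $\mathbf{e}_\mathcal{T}={Q_1}_\mathcal{T}\mathbf{z}$.

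Next I would reduce the target inequality to a cleaner one. Writing $\norm{\mathbf{e}}_1=\norm{\mathbf{e}_\mathcal{T}}_1+\norm{\mathbf{e}_{\mathcal{T}^c}}_1$, it suffices to prove the sharper bound $\norm{\mathbf{e}_\mathcal{T}}_1<\tfrac{1}{2}\norm{\mathbf{e}}_1$, since this immediately rearranges to $\norm{\mathbf{e}_\mathcal{T}}_1<\norm{\mathbf{e}_{\mathcal{T}^c}}_1$. So the whole problem collapses to controlling the mass of $\mathbf{e}$ on any $k$-subset by half of its total $\ell_1$ mass.

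To establish $\norm{\mathbf{e}_\mathcal{T}}_1<\tfrac{1}{2}\norm{\mathbf{e}}_1$, I would chain together the norm inequalities stated in the Notation section. First, because $\mathbf{e}_\mathcal{T}$ lives in $\mathbb{R}^{\abs{\mathcal{T}}}$ with $\abs{\mathcal{T}}\le k$, the inequality $\norm{\cdot}_1\le\abs{\mathcal{T}}^{1-\frac{1}{q}}\norm{\cdot}_q$ gives $\norm{\mathbf{e}_\mathcal{T}}_1\le k^{1-\frac{1}{q}}\norm{\mathbf{e}_\mathcal{T}}_q$. Substituting $\mathbf{e}_\mathcal{T}={Q_1}_\mathcal{T}\mathbf{z}$ and applying the definition of the induced $q$-norm together with the hypothesis $\norm{{Q_1}_\mathcal{T}}_q<\tfrac{1}{2} k^{\frac{1}{q}-1}$ yields $\norm{\mathbf{e}_\mathcal{T}}_q\le\norm{{Q_1}_\mathcal{T}}_q\norm{\mathbf{z}}_q<\tfrac{1}{2} k^{\frac{1}{q}-1}\norm{\mathbf{z}}_q$, so the powers of $k$ cancel and I obtain $\norm{\mathbf{e}_\mathcal{T}}_1<\tfrac{1}{2}\norm{\mathbf{z}}_q$.

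The last, and I expect the only delicate, step is to pass from $\norm{\mathbf{z}}_q$ back to $\norm{\mathbf{e}}_1$, which is where the assumptions $q\ge2$ and the orthonormality of $Q_1$ enter. Since $q\ge2$, the monotonicity $\norm{\mathbf{z}}_q\le\norm{\mathbf{z}}_2$ holds; orthonormality of the columns of $Q_1$ gives the isometry $\norm{\mathbf{z}}_2=\norm{Q_1\mathbf{z}}_2=\norm{\mathbf{e}}_2$; and finally $\norm{\mathbf{e}}_2\le\norm{\mathbf{e}}_1$. Concatenating these gives $\norm{\mathbf{z}}_q\le\norm{\mathbf{e}}_1$, whence $\norm{\mathbf{e}_\mathcal{T}}_1<\tfrac{1}{2}\norm{\mathbf{e}}_1$ as required, and strictness is preserved because $\mathbf{e}\neq0$ forces $\mathbf{z}\neq0$. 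The condition $k<\tfrac{m}{2}$ is not needed for the chain of estimates itself, but it guarantees that $\mathcal{T}^c$ is nonempty so that the nullspace property is nonvacuous.
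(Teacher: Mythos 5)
Your proposal is correct and follows essentially the same route as the paper: the identical chain $\norm{\mathbf{e}_\mathcal{T}}_1\le\abs{\mathcal{T}}^{1-\frac{1}{q}}\norm{{Q_1}_\mathcal{T}\mathbf{z}}_q$, the hypothesis, $\norm{\cdot}_q\le\norm{\cdot}_2$ for $q\ge2$, the isometry $\norm{\mathbf{z}}_2=\norm{Q_1\mathbf{z}}_2$, and $\norm{\cdot}_2\le\norm{\cdot}_1$, yielding $2\norm{\mathbf{e}_\mathcal{T}}_1<\norm{\mathbf{e}}_1$ and hence the nullspace property. The only (welcome) difference is that you make explicit the identification $\mathcal{N}(Q_2^\top)=\mathcal{R}(Q_1)$, which the paper leaves implicit.
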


\begin{proof}
First, we observe that the inequality $\norm{\mathbf{e}}_q\le\norm{\mathbf{e}}_2\le\norm{\mathbf{e}}_1\le m^{1-\frac{1}{q}}\norm{\mathbf{e}}_q$ holds for all vector $\mathbf{e}\in\mathbb{R}^m$ and integer $q\ge2$. Thus,  for all $\mathcal{T}\subset\{1,2,\hdots,m\}$ with $\abs{\mathcal{T}}\le k$ and $\mathbf{x}\in\mathbb{R}^n$,
\begin{align*}
		\norm{{Q_1}_\mathcal{T}}_q &< \frac{1}{2}k^{\frac{1}{q}-1}\Rightarrow 2k^{1-\frac{1}{q}}\norm{{Q_1}_\mathcal{T}\mathbf{x}}_q<\norm{\mathbf{x}}_q\\
		&\Rightarrow 2\abs{\mathcal{T}}^{1-\frac{1}{q}}\norm{{Q_1}_\mathcal{T}\mathbf{x}}_q<\norm{\mathbf{x}}_q\\
		&\Rightarrow 2\norm{{Q_1}_\mathcal{T}\mathbf{x}}_1 < \norm{\mathbf{x}}_2 = \norm{Q_1\mathbf{x}}_2\\
		&\Rightarrow 2\norm{{Q_1}_\mathcal{T}\mathbf{x}}_1 < \norm{Q_1\mathbf{x}}_2 < \norm{Q_1\mathbf{x}}_1\\
		&\Rightarrow 2\norm{{Q_1}_\mathcal{T}\mathbf{x}}_1 < \norm{Q_1\mathbf{x}}_1 = \norm{{Q_1}_\mathcal{T}\mathbf{x}}_1 + \norm{{Q_1}_{\mathcal{T}^c}\mathbf{x}}_1\\
		&\Rightarrow \norm{{Q_1}_\mathcal{T}\mathbf{x}}_1 < \norm{{Q_1}_{\mathcal{T}^c}\mathbf{x}}_1\\
		&\Rightarrow Q_2^\top\in\textsf{NSP}_1(k,1)
\end{align*}
\end{proof}
\begin{remark}
		For $q=2$, the sufficient condition becomes $\overline{\sigma}({Q_1}_\mathcal{T})<\frac{1}{2\sqrt{k}}$ which imposes a limit on the amount of information any $k$-group of rows can convey of the orthogonal matrix $Q_1$. In other words, this ensures there is sufficient redundancy such if any $k$ combination of rows are deleted, the resulting system can still be used to reconstruct the state. This property is the motivation for the support refinement and row deletion scheme in \cite{anubi2018robust}.
\end{remark}
The following corollary gives a more specialized result based on $q=1$.
\begin{corollary}
Let $\mathbf{v}\in\mathbb{R}^m$ be a vector whose elements are the $\infty$-norm of the corresponding row of $Q_1$ i.e, $\mathbf{v}_i = \max\limits_{1\le j\le m}{\abs{{Q_1}_{ij}}}$.
If
\begin{align}
		\norm{\mathbf{v}[k]}_1<\frac{1}{2\sqrt{n}},
\end{align}
then $Q_2^\top\in\textsf{NSP}_1(k,1)$.
\end{corollary}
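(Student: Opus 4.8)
The plan is to mirror the structure of the preceding theorem but to replace its operator‑norm argument (which is valid only for $q\ge 2$) by a cruder, cleaner row‑wise estimate tailored to the $\infty$‑norm data stored in $\mathbf{v}$. As in that proof, I would first use the orthogonality of $Q=\left[\begin{array}{cc}Q_1&Q_2\end{array}\right]$ to identify $\mathcal{N}(Q_2^\top)=\mathcal{R}(Q_1)$, so that every nonzero $\mathbf{e}\in\mathcal{N}(Q_2^\top)$ is of the form $\mathbf{e}=Q_1\mathbf{x}$ for a unique $\mathbf{x}\neq 0$, whence $\mathbf{e}_\mathcal{T}={Q_1}_\mathcal{T}\mathbf{x}$ and $\mathbf{e}_{\mathcal{T}^c}={Q_1}_{\mathcal{T}^c}\mathbf{x}$. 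Establishing $Q_2^\top\in\textsf{NSP}_1(k,1)$ therefore reduces to proving that $\norm{{Q_1}_\mathcal{T}\mathbf{x}}_1<\norm{{Q_1}_{\mathcal{T}^c}\mathbf{x}}_1$ for every $\mathbf{x}\neq0$ and every $\mathcal{T}$ with $\abs{\mathcal{T}}\le k$, which, after adding $\norm{{Q_1}_\mathcal{T}\mathbf{x}}_1$ to both sides, is equivalent to the single inequality $2\norm{{Q_1}_\mathcal{T}\mathbf{x}}_1<\norm{Q_1\mathbf{x}}_1$.

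Next I would bound the left-hand side using the row data. For each index $i$, Hölder's inequality gives $\abs{(Q_1\mathbf{x})_i}=\abs{\sum_j {Q_1}_{ij}\mathbf{x}_j}\le\left(\max_j\abs{{Q_1}_{ij}}\right)\norm{\mathbf{x}}_1=\mathbf{v}_i\norm{\mathbf{x}}_1$. Summing over $i\in\mathcal{T}$ and maximizing over admissible supports, since the entries $\mathbf{v}_i$ are nonnegative the largest possible value of $\sum_{i\in\mathcal{T}}\mathbf{v}_i$ subject to $\abs{\mathcal{T}}\le k$ is exactly the sum of the $k$ largest entries of $\mathbf{v}$, namely $\norm{\mathbf{v}[k]}_1$. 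This yields the uniform bound $\norm{{Q_1}_\mathcal{T}\mathbf{x}}_1\le\norm{\mathbf{v}[k]}_1\,\norm{\mathbf{x}}_1$.

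To close the argument I convert $\norm{\mathbf{x}}_1$ into $\norm{Q_1\mathbf{x}}_1$ using the orthonormality of the columns of $Q_1$: $\norm{\mathbf{x}}_1\le\sqrt{n}\,\norm{\mathbf{x}}_2=\sqrt{n}\,\norm{Q_1\mathbf{x}}_2\le\sqrt{n}\,\norm{Q_1\mathbf{x}}_1$, where the middle equality is the isometry $\norm{Q_1\mathbf{x}}_2=\norm{\mathbf{x}}_2$ and the two inequalities are the standard $\norm{\mathbf{z}}_1\le\sqrt{n}\,\norm{\mathbf{z}}_2$ on $\mathbb{R}^n$ together with $\norm{\mathbf{z}}_2\le\norm{\mathbf{z}}_1$. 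Combining this with the previous step and the hypothesis $\norm{\mathbf{v}[k]}_1<\frac{1}{2\sqrt{n}}$ gives, for any $\mathbf{x}\neq0$ (so that $\norm{Q_1\mathbf{x}}_1>0$), the strict chain $\norm{{Q_1}_\mathcal{T}\mathbf{x}}_1\le\sqrt{n}\,\norm{\mathbf{v}[k]}_1\,\norm{Q_1\mathbf{x}}_1<\frac{1}{2}\norm{Q_1\mathbf{x}}_1$, which is precisely $2\norm{{Q_1}_\mathcal{T}\mathbf{x}}_1<\norm{Q_1\mathbf{x}}_1$ and hence, by the first paragraph, the desired nullspace property.

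The argument is essentially a sequence of norm inequalities, so the main points to get right are bookkeeping rather than any deep obstacle. I would flag two things. First, the dimension factor: the $\sqrt{n}$ here (as opposed to the $\sqrt{k}$ appearing in the $q=2$ remark) arises exactly because the conversion $\norm{\mathbf{x}}_1\le\sqrt{n}\,\norm{\mathbf{x}}_2$ lives on $\mathbb{R}^n$, while the crude Hölder step deliberately trades tightness for the clean $\infty$‑norm row quantity $\mathbf{v}_i$ — this is what makes it the ``$q=1$'' analogue. Second, preserving the \emph{strict} inequality demanded by the NSP only requires that $\mathbf{x}\neq0$ force $Q_1\mathbf{x}\neq0$, which holds because $Q_1$ has full column rank; the strictness of the hypothesis $\norm{\mathbf{v}[k]}_1<\frac{1}{2\sqrt{n}}$ then propagates through to the conclusion.
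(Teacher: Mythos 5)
Your proof is correct and takes essentially the same route as the paper's: both reduce the claim to $2\norm{{Q_1}_\mathcal{T}\mathbf{x}}_1<\norm{Q_1\mathbf{x}}_1$, bound the left side by $2\norm{\mathbf{v}[k]}_1\norm{\mathbf{x}}_1$ (the paper via the induced $1$-norm max-column-sum, you via a row-wise H\"older estimate — the same inequality), and bound the right side below by $\frac{1}{\sqrt{n}}\norm{\mathbf{x}}_1$ using the column-orthonormality of $Q_1$. Your explicit identification of $\mathcal{N}(Q_2^\top)=\mathcal{R}(Q_1)$ and the remark on strictness are details the paper leaves implicit, but nothing substantive differs.
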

\begin{proof}
		First, we make the following observations for all $\mathbf{x}\in\mathbb{R}^n$
		\begin{align*}
				&\bullet\hspace{2mm}\norm{{Q_1}_\mathcal{T}\mathbf{x}}_1\le\left(\max\limits_{1\le j\le n}\left\{\norm{{Q_1}_{\mathcal{T}}^j}_1\right\}\right)\norm{\mathbf{x}}_1\\
				&\phantom{\bullet\hspace{2mm}\norm{{Q_1}_\mathcal{T}\mathbf{x}}_1}\le\norm{\mathbf{v}[k]}_1\norm{\mathbf{x}}_1.\\
				&\bullet\hspace{2mm}\norm{{Q_1}\mathbf{x}}_1\ge\norm{{Q_1}\mathbf{x}}_2=\norm{\mathbf{x}}_2\ge\frac{1}{\sqrt{n}}\norm{\mathbf{x}}_1\\
				&\phantom{\bullet\hspace{2mm}\norm{{Q_1}\mathbf{x}}_1}\Rightarrow \frac{1}{\sqrt{n}}\norm{\mathbf{x}}_1\le\norm{{Q_1}\mathbf{x}}_1.
		\end{align*}
		Thus, if $2\norm{\mathbf{v}[k]}_1<\frac{1}{\sqrt{n}}$, then 
		\begin{align*}
				&2\norm{{Q_1}_\mathcal{T}\mathbf{x}}_1\le2\norm{\mathbf{v}[k]}_1\norm{\mathbf{x}}_1<\frac{1}{\sqrt{n}}\norm{\mathbf{x}}_1\le\norm{{Q_1}\mathbf{x}}_1,\\
				&\Rightarrow 2\norm{{Q_1}_\mathcal{T}\mathbf{x}}_1 < \norm{Q_1\mathbf{x}}_1 = \norm{{Q_1}_\mathcal{T}\mathbf{x}}_1 + \norm{{Q_1}_{\mathcal{T}^c}\mathbf{x}}_1\\
				&\Rightarrow \norm{{Q_1}_\mathcal{T}\mathbf{x}}_1 < \norm{{Q_1}_{\mathcal{T}^c}\mathbf{x}}_1\\
				&\Rightarrow Q_2^\top\in\textsf{NSP}_1(k,1)
		\end{align*}
\end{proof}
\section{Resilient Estimation with Prior Information}\label{s:methodology}
Using prior information to enhance the recovery of sparse signals in compressive sensing is not a new idea \cite{friedlander2011recovering,anubi2018robust,miosso2009compressive,scarlett2012compressed}. However, vast majority of the existing literature focuses on prior information relating to the support of the sparse signal. In this paper, we consider prior information as a probability distribution over the system measurements. For cyber-physical systems, which are the primary subject of this study, such information is readily available via data-driven auxiliary models. 
In the light of model \eqref{eqn:meas_model} and the optimization problem in \eqref{eqn:comp_sens_L1}, consider the following slightly more general problem:
\begin{align}\label{eqn:comp_sens_subspace}
\Minimize\limits_{\mathbf{e}}{\left\|\mathbf{e}\right\|_1\hspace{2mm}\SubjectTo\hspace{2mm}\mathbf{y}-\mathbf{e}\in\mathcal{V}\cap\mathcal{X}},
\end{align}
where $\mathcal{V}\subset\mathbb{R}^m$ is a linear subspace satisfying the \emph{subspace property} $\norm{\mathbf{v}_\mathcal{T}}_1\le\gamma\norm{\mathbf{v}_{\mathcal{T}^c}}_1$, $\forall \mathbf{v}\in\mathcal{V}, \abs{\mathcal{T}}\le k<m$, and $\mathcal{X}\subset\mathbb{R}^m$ is a convex set with the bounded property $\norm{\mathbf{x}}_1\le\delta$, $\forall \mathbf{x}\in\mathcal{X}$. The bounded set adds extra layer of prior information which, as we will show next, improves the reconstruction error bound. While we have used a very simple bound here, other relevant property may be used to encode specialized prior information which can then lead to specialized result for the particular application. For instance; the bound could be probabilistic -- determined from the ROC characteristic of a data-driven, encode domain-specific relationship among the measurement channels.\\

\noindent We now have all the ingredients to state our main results:
\begin{theorem}\label{thm:main_result}
Consider the recovery optimization problem in \eqref{eqn:comp_sens_subspace}, where the linear subspace $\mathcal{V}$ satisfies the \emph{subspace property} $\norm{\mathbf{v}_\mathcal{T}}_1\le\gamma\norm{\mathbf{v}_{\mathcal{T}^c}}_1$, $\forall \mathbf{v}\in\mathcal{V}, \abs{\mathcal{T}}\le k<m$, and the convex set $\mathcal{X}\subset\mathbb{R}^m$ satisfies the bounded property $\norm{\mathbf{x}}_1\le\delta$, $\forall \mathbf{x}\in\mathcal{X}$. The reconstruction error with respect to any feasible vector $\mathbf{e}\in\mathbb{R}^m$ is bounded as:
\begin{align}
    \norm{\hat{\mathbf{e}}-\mathbf{e}}_1\le2\textsf{sat}_\delta\left(\frac{1+\gamma}{1-\gamma}\norm{\mathbf{e}-\mathbf{e}[k]}_1\right),
\end{align}
where $\mathbf{e}[k]$ is the best $k$-term approximation of $\mathbf{e}$.
\end{theorem}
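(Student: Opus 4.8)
The plan is to reduce the statement to two separate bounds on the error vector $\mathbf{h}\triangleq\hat{\mathbf{e}}-\mathbf{e}$ and then combine them through the saturation function. First I would exploit the feasibility of both $\hat{\mathbf{e}}$ and $\mathbf{e}$: since $\mathbf{y}-\hat{\mathbf{e}}\in\mathcal{V}\cap\mathcal{X}$ and $\mathbf{y}-\mathbf{e}\in\mathcal{V}\cap\mathcal{X}$, the difference $(\mathbf{y}-\mathbf{e})-(\mathbf{y}-\hat{\mathbf{e}})=\mathbf{h}$ lies in the subspace $\mathcal{V}$ by closure under subtraction, so $\mathbf{h}$ inherits the subspace property $\norm{\mathbf{h}_\mathcal{T}}_1\le\gamma\norm{\mathbf{h}_{\mathcal{T}^c}}_1$ for every $\abs{\mathcal{T}}\le k$. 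At the same time, the bounded property of $\mathcal{X}$ gives $\norm{\mathbf{y}-\hat{\mathbf{e}}}_1\le\delta$ and $\norm{\mathbf{y}-\mathbf{e}}_1\le\delta$, so the triangle inequality immediately yields the first bound $\norm{\mathbf{h}}_1\le2\delta$.

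Next I would derive the standard NSP-type reconstruction bound. Let $\mathcal{T}_0$ denote the support of the best $k$-term approximation $\mathbf{e}[k]$, so that $\norm{\mathbf{e}_{\mathcal{T}_0^c}}_1=\norm{\mathbf{e}-\mathbf{e}[k]}_1$. Using the optimality inequality $\norm{\hat{\mathbf{e}}}_1\le\norm{\mathbf{e}}_1$, which holds because $\mathbf{e}$ is feasible and $\hat{\mathbf{e}}$ is the minimizer, writing $\hat{\mathbf{e}}=\mathbf{e}+\mathbf{h}$, and splitting the $\ell_1$ norm over $\mathcal{T}_0$ and $\mathcal{T}_0^c$ with the reverse triangle inequality, I would obtain the cone condition $\norm{\mathbf{h}_{\mathcal{T}_0^c}}_1\le\norm{\mathbf{h}_{\mathcal{T}_0}}_1+2\norm{\mathbf{e}-\mathbf{e}[k]}_1$. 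Feeding in the subspace property $\norm{\mathbf{h}_{\mathcal{T}_0}}_1\le\gamma\norm{\mathbf{h}_{\mathcal{T}_0^c}}_1$ and rearranging gives $\norm{\mathbf{h}_{\mathcal{T}_0^c}}_1\le\frac{2}{1-\gamma}\norm{\mathbf{e}-\mathbf{e}[k]}_1$; then $\norm{\mathbf{h}}_1=\norm{\mathbf{h}_{\mathcal{T}_0}}_1+\norm{\mathbf{h}_{\mathcal{T}_0^c}}_1\le(1+\gamma)\norm{\mathbf{h}_{\mathcal{T}_0^c}}_1$ delivers the second bound $\norm{\mathbf{h}}_1\le\frac{2(1+\gamma)}{1-\gamma}\norm{\mathbf{e}-\mathbf{e}[k]}_1$.

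Finally I would combine the two: since both $2\delta$ and $\frac{2(1+\gamma)}{1-\gamma}\norm{\mathbf{e}-\mathbf{e}[k]}_1$ upper bound $\norm{\mathbf{h}}_1$, so does their minimum, and because the argument $\frac{1+\gamma}{1-\gamma}\norm{\mathbf{e}-\mathbf{e}[k]}_1$ is nonnegative while $\delta>0$, one has $\min\{\delta,\frac{1+\gamma}{1-\gamma}\norm{\mathbf{e}-\mathbf{e}[k]}_1\}=\textsf{sat}_\delta(\frac{1+\gamma}{1-\gamma}\norm{\mathbf{e}-\mathbf{e}[k]}_1)$, which produces the claimed bound after multiplying by $2$.

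The main obstacle is not any single step, since each is elementary, but rather assembling them in the right order: the nontrivial observation is that the extra convex constraint $\mathcal{X}$ contributes only through the crude yet decisive $2\delta$ bound, and that the saturation function is precisely the bookkeeping device recording whichever of the two bounds is tighter. I expect the most error-prone part to be the sign and index bookkeeping in the cone-condition derivation, specifically ensuring the reverse triangle inequality is applied on the correct block, and the cleanest way to avoid mistakes there is to keep $\mathcal{T}_0$ fixed as the top-$k$ support throughout.
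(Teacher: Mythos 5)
Your proposal is correct and follows essentially the same route as the paper's proof: the cone condition from optimality of $\hat{\mathbf{e}}$, the subspace property applied to $\mathbf{h}=\hat{\mathbf{e}}-\mathbf{e}\in\mathcal{V}$, the crude $\norm{\mathbf{h}}_1\le2\delta$ bound from $\mathbf{y}-\mathbf{e},\mathbf{y}-\hat{\mathbf{e}}\in\mathcal{X}$, and the observation that the minimum of the two bounds is exactly $2\textsf{sat}_\delta(\cdot)$. The only cosmetic difference is that you substitute the subspace inequality directly into the cone condition while the paper adds the two inequalities before rearranging; your fixing of $\mathcal{T}_0$ as the top-$k$ support from the outset is, if anything, slightly cleaner than the paper's final selection of $\mathcal{T}$.
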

\begin{remark}
This result is similar to existing recovery error-bound in literature \cite{chen2012stability}. The main difference lie in the saturation given by the bound on the prior-information set. This bound show up explicitly because of the way it was defined in the set. In some practical situation, such explicit bound may not exist. It is easy to modify the result based on the new characteristic of the prior-information set. In situations where the actual vector is only known to belong to the set $\mathcal{X}$ with some probability, the inclusion constraint may be reformulated into a chance constraint with the final result inheriting the associated probabilistic guarantees.
\end{remark}
\begin{remark}
Indeed, any $k$-sparse feasible vector $\mathbf{e}\in\mathbb{R}^m$, $\abs{\supp(\mathbf{e})}\le k<m$  will be recovered exactly by the solution to the optimization problem in \eqref{eqn:comp_sens_subspace}. Although the question of the stability of the recovery process to process noise is not pursued in this paper, we expect similar saturated error bound results as obtained above. We will demonstrate the stability numerically by including noise in the example given in subsequent sections.
\end{remark}

\begin{proof}
Let $\mathbf{e}$ be a feasible point of the optimization problem in \eqref{eqn:comp_sens_subspace}, and $\hat{\mathbf{e}}\triangleq\mathbf{e} + \mathbf{h}, \hspace{1mm} \mathbf{h}\in\mathbb{R}^n$  be the optimal point. Given $k<m$, define the index set $\mathcal{T}\subset\left\{1,2,\hdots,m\right\}$ with $\abs{\mathcal{T}}\le k$. By the optimality of $\hat{\mathbf{e}}$, we have that $\norm{\mathbf{e}}_1\ge\norm{\hat{\mathbf{e}}}_1$, which implies that:
\begin{align*}
    \norm{\mathbf{e}}_1&\ge\norm{\hat{\mathbf{e}}}_1 = \norm{\mathbf{e}+\mathbf{h}}_1 \\
    &\phantom{norm{\hat{\mathbf{e}}}}=\norm{\mathbf{e}_\mathcal{T}+\mathbf{h}_\mathcal{T}}_1 +  \norm{\mathbf{e}_{\mathcal{T}^c}+\mathbf{h}_{\mathcal{T}^c}}_1\\
    &\ge\norm{\mathbf{e}_{\mathcal{T}}}_1 - \norm{\mathbf{h}_{\mathcal{T}}}_1 + \norm{\mathbf{h}_{\mathcal{T}^c}}_1 - \norm{\mathbf{e}_{\mathcal{T}^c}}_1\\
    \Longrightarrow \norm{\mathbf{h}_{\mathcal{T}^c}}_1&\le \norm{\mathbf{h}_{\mathcal{T}}}_1 + \norm{\mathbf{e}}_1 - \norm{\mathbf{e}_{\mathcal{T}}}_1 + \norm{\mathbf{e}_{\mathcal{T}^c}}_1\\
    &= \norm{\mathbf{h}_\mathcal{T}}_1 + 2\norm{\mathbf{e}_{\mathcal{T}^c}}_1.
\end{align*}
Thus
\begin{align}\label{eqn:claim1}
\norm{\mathbf{h}_{\mathcal{T}^c}}_1\le\norm{\mathbf{h}_\mathcal{T}}_1 + 2\norm{\mathbf{e}_{\mathcal{T}^c}}_1.
\end{align}

\noindent Next, since $\mathbf{e}$ and $\hat{\mathbf{e}}$ are feasible, i.e., $\mathbf{e},\hat{\mathbf{e}}\in\mathcal{X}\Rightarrow\norm{\mathbf{e}-\mathbf{y}}_1\le\delta$ and $\norm{\hat{\mathbf{e}}-\mathbf{y}}_1 = \norm{\mathbf{h}+\mathbf{e}-\mathbf{y}}_1\le\delta$, it follows that
\begin{align}\label{eqn:claim2}
    \norm{\mathbf{h}}_1 = \norm{\mathbf{h}_\mathcal{T}}_1 + \norm{\mathbf{h}_{\mathcal{T}^c}}_1&\le2\delta.
\end{align}

\noindent Moreover, from the feasibility of $\mathbf{e}$ and $\hat{\mathbf{e}}$, $\mathbf{e}-\mathbf{y},\hat{\mathbf{e}}-\mathbf{y}\in\mathcal{V}\Rightarrow\mathbf{h}=\hat{\mathbf{e}}-\mathbf{e}\in\mathcal{V}$. Thus, from the subspace property, it follows that
\begin{align}\label{eqn:claim3}
    \norm{\mathbf{h}_\mathcal{T}}_1\le\gamma\norm{\mathbf{h}_{\mathcal{T}^c}}_1,\hspace{2mm}\text{for some } 0<\gamma<1.
\end{align}

Adding the inequalities in \eqref{eqn:claim1} and \eqref{eqn:claim3} gives
\begin{align*}
    \norm{\mathbf{h}}_1&\le\left(1-\gamma\right)\norm{\mathbf{h}_\mathcal{T}}_1 + \gamma\norm{\mathbf{h}_\mathcal{T}}_1 + 2\norm{\mathbf{e}_{\mathcal{T}^c}}_1 + \gamma\norm{\mathbf{h}_{\mathcal{T}^c}}_1\\
    &\le\left(1-\gamma\right)\norm{\mathbf{h}_\mathcal{T}}_1 + \gamma\norm{\mathbf{h}}_1 + 2\norm{\mathbf{e}_{\mathcal{T}^c}}_1.
\end{align*}
Subtracting $\gamma\norm{\mathbf{h}}_1$ from both sides and dividing by $1 - \gamma$ gives
\begin{align*}
    \norm{\mathbf{h}}_1\le\norm{\mathbf{h}_\mathcal{T}}_1 + \frac{2}{1-\gamma}\norm{\mathbf{e}_{\mathcal{T}^c}}_1,
\end{align*}
so that
\begin{align}\label{eqn:claim4}
    \norm{\mathbf{h}_{\mathcal{T}^c}}_1\le \frac{2}{1-\gamma}\norm{\mathbf{e}_{\mathcal{T}^c}}_1.
\end{align}
Combining \eqref{eqn:claim3} and \eqref{eqn:claim4} yields
\begin{align}\label{eqn:claim5}
    \norm{\mathbf{h}_\mathcal{T}}_1\le\gamma\norm{\mathbf{h}_{\mathcal{T}^c}}_1 \le \frac{2\gamma}{1-\gamma}\norm{\mathbf{e}_{\mathcal{T}^c}}_1.
\end{align}
By, adding the inequalities in \eqref{eqn:claim4} and \eqref{eqn:claim5}, it follows that
\begin{align}\label{eqn:claim6}
    \norm{\mathbf{h}}_1=\norm{\mathbf{h}_\mathcal{T}}_1+\norm{\mathbf{h}_{\mathcal{T}^c}}\le \frac{2\left(1+\gamma\right)}{1-\gamma}\norm{\mathbf{e}_{\mathcal{T}^c}}_1,
\end{align}
which, after combining with \eqref{eqn:claim2}, yields
\begin{align*}
    \norm{\mathbf{h}}_1&\le\min\left\{\frac{2\left(1+\gamma\right)}{1-\gamma}\norm{\mathbf{e}_{\mathcal{T}^c}}_1,2\delta\right\}\\
    &\le2\min\left\{\frac{\left(1+\gamma\right)}{1-\gamma}\norm{\mathbf{e}_{\mathcal{T}^c}}_1,\delta\right\}.
\end{align*}
Thus, the inequality
\begin{align*}
    \norm{\mathbf{h}}_1\le 2\textsf{sat}_\delta\left(\frac{\left(1+\gamma\right)}{1-\gamma}\norm{\mathbf{e}_{\mathcal{T}^c}}_1\right)
\end{align*} holds for all index set $\mathcal{T}\subset\left\{1,2,\hdots,m\right\}$ with $\abs{\mathcal{T}}\le k$. The result follows by selecting $\mathcal{T} = \supp(\mathbf{e})$.
\end{proof}

Now, we focus on the development of a resilient reconstruction algorithm using both  measurement model and a prior information model. Consider a concurrent model of the form:
\begin{align}\label{eqn:model_based}
    \mathbf{y} &= H\mathbf{x} + \mathbf{e}+ \boldsymbol{\varepsilon}\\\label{eqn:data_driven}
    \mathbf{y} &\sim\mathcal{N}(\mu(\mathbf{z}),\Sigma(\mathbf{z}))\\\label{eqn:noise_model}
    \boldsymbol{\varepsilon}&\sim\mathcal{N}(\mathbf{0},\textsf{diag}(\sigma_1^2,\hdots,\sigma_m^2))
\end{align}
where $H\in\mathbb{R}^{m\times n}$ is the measurement matrix, $\mathbf{x}\in\mathbb{R}^n$ is the state vector, $\mathbf{e}\in\mathbb{R}^m, \norm{\mathbf{e}}_0\le k<m$ is the attack vector, and $\boldsymbol{\varepsilon}\in\mathbb{R}^m$ is the measurement noise. The concurrent model consists of a measurement model \eqref{eqn:model_based}, prior information (auxiliary) model \eqref{eqn:data_driven} given as a function of the auxiliary variable $\mathbf{z}\in\mathbb{R}^p$, and a noise model \eqref{eqn:noise_model}, 
\noindent where
\begin{align*}
    \mu(\mathbf{z}) = \left[\begin{array}{c}\mu_1(\mathbf{z})\\\vdots\\\mu_m(\mathbf{z})\end{array}\right]\text{ and }
    \Sigma(\mathbf{z})= \left[\begin{array}{ccc}\Sigma_1(\mathbf{z})&&\\&\ddots&\\&&\Sigma_m(\mathbf{z})\end{array}\right]
\end{align*}
for some mean and covariance functions $\mu_i:\mathbb{R}^p\mapsto\mathbf{R}$ and $\Sigma_i:\mathbb{R}^p\mapsto\mathbb{R}_+$ respectively (see Section~\ref{sec:auxiliary_model} for a particular example using GPR). For a Cyber-physical system, the measurement model is usually physics-based while the prior-information is data-driven. The noise model is generally knowledge-based. One of the main advantages of using models of this form for a CPS is that the resulting blend of the generalization properties of physics-based models and the adaptive local accuracy of data-driven methods creates an additional layer of redundancy which can reveal the truth even if portions of the measurement is subject to adversarial corruption. In order to remain undetectable, any viable attack vector $\mathbf{y}_a,\norm{\mathbf{y}_a}_{\ell_0}=p\le m$ necessarily have to satisfy the condition $p(\mathbf{y}+\mathbf{y}_a|\mathbf{z},\mathcal{D})\ge p(\mathbf{y}|\mathbf{z},\mathcal{D})$. This provides an additional layer of security by: \textit{1)} requiring the attacker to have knowledge of the auxiliary model and the parameters, and \textit{2)} limiting the magnitude of possible state corruption.

Let $\mathbf{y}^*$ be the true value of the measured variable, the optimal estimation problem is cast as the optimization problem:

\begin{align}\label{eqn:enh_res_est}
\begin{array}{lc}
\Minimize & \left\|\mathbf{y}-H\mathbf{x}-\boldsymbol{\varepsilon}\right\|_{l_0} \\
\SubjectTo&\\
          &H\mathbf{x}\in\mathcal{Y}(\mathbf{z})\\
          &\boldsymbol{\varepsilon}\in\mathcal{E},
\end{array}
\end{align}
where the convex sets $\mathcal{Y}(\mathbf{z})$ and $\mathcal{E}$ have the property that:
\begin{align}
    p(\mathbf{y}^*\in\mathcal{Y}|\mathbf{z},\mathcal{D})\ge\tau\\
          p(\boldsymbol{\varepsilon}^*\in\mathcal{E})\ge\tau.
\end{align}

\noindent The idea is essentially seeking a state vector, together with the minimum attacked channels and a highly likely noise vector, which completely explains the observations while having a high likelihood according to the auxiliary model prior. Ideally, one would use an index minimizing ``$0$-norm" in the objective, as done above. However, Theorem~\ref{thm:main_result} shows that the $1$-norm relaxation achieves a really good reconstruction property, provided that the range space of $H$ satisfies the subspace property. The optimization parameter $\tau\in(0,\hspace{2mm}1]$ controls the likelihood threshold. It can be set to a constant value or optimized with respect to some higher-level objectives. Thus, the resilient state estimation optimization problem is equivalent to:

\begin{align}\label{eqn:enh_res_est2}
\begin{array}{ll}
\Minimize & \left\|\mathbf{y}-H\mathbf{x}-\boldsymbol{\varepsilon}\right\|_1 \\
\SubjectTo&\\
        &\begin{array}{rl}
          \norm{H\mathbf{x}+\boldsymbol{\varepsilon}-\mu(\mathbf{z})}_{\Sigma^{-1}(\mathbf{z})}^2 & \le \chi^2_m(\tau)\\
          \norm{\boldsymbol{\varepsilon}}_{\Sigma_\varepsilon^{-1}}^2 & \le\chi^2_{m}(\tau),
        \end{array}
\end{array}
\end{align}

\noindent where $\Sigma_\varepsilon = \textsf{diag}(\sigma_1^2,\hdots,\sigma_m^2)$ and $\chi^2_m(\tau)$ is the quantile function for probability $\tau$ of the chi-squared distribution with $m$ degrees of freedom.

The following lemma will be useful in proving the next result about the reconstruction error bound of the resulting resilient estimation based on the optimization problem in \eqref{eqn:enh_res_est2}.
\begin{lemma}\label{lemma:kth_term_error_bound}
Given a vector $\boldsymbol{\varepsilon}\in\mathbb{R}^m$ with $\norm{\boldsymbol{\varepsilon}}_2\le\delta$, then the following $k$th term approximation error bound
\begin{align}
    \norm{\boldsymbol{\varepsilon} - \boldsymbol{\varepsilon}[k]}_1\le\frac{m-k}{\sqrt{m}}\delta
\end{align}
holds for $k<m$.
\end{lemma}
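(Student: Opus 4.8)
The plan is to recognize that $\boldsymbol{\varepsilon}[k]$ retains the $k$ largest-magnitude entries of $\boldsymbol{\varepsilon}$, so the approximation residual $\norm{\boldsymbol{\varepsilon}-\boldsymbol{\varepsilon}[k]}_1$ is exactly the $\ell_1$-mass carried by the remaining $m-k$ smallest entries. I would assume without loss of generality that the entries are ordered by decreasing magnitude, $\abs{\boldsymbol{\varepsilon}_1}\ge\abs{\boldsymbol{\varepsilon}_2}\ge\cdots\ge\abs{\boldsymbol{\varepsilon}_m}$, and set $\mathcal{T}=\supp(\boldsymbol{\varepsilon}[k])=\{1,\dots,k\}$, so that $\norm{\boldsymbol{\varepsilon}-\boldsymbol{\varepsilon}[k]}_1=\sum_{i=k+1}^m\abs{\boldsymbol{\varepsilon}_i}=\norm{\boldsymbol{\varepsilon}_{\mathcal{T}^c}}_1$, where $\mathcal{T}^c$ indexes the $m-k$ tail entries. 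The goal is then to bound this tail $\ell_1$-norm in terms of the global bound $\norm{\boldsymbol{\varepsilon}}_2\le\delta$.

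First I would apply the Cauchy--Schwarz inequality on the tail block alone, giving $\norm{\boldsymbol{\varepsilon}_{\mathcal{T}^c}}_1\le\sqrt{m-k}\,\norm{\boldsymbol{\varepsilon}_{\mathcal{T}^c}}_2$. Using this together with $\norm{\boldsymbol{\varepsilon}_{\mathcal{T}^c}}_2\le\norm{\boldsymbol{\varepsilon}}_2\le\delta$ would only yield the weaker bound $\sqrt{m-k}\,\delta$; the sharper constant $\tfrac{m-k}{\sqrt{m}}$ requires exploiting that the tail holds the \emph{smallest} entries. The key step is the observation that the average squared magnitude over the tail cannot exceed the average squared magnitude over the whole vector: since $\abs{\boldsymbol{\varepsilon}_i}^2$ is non-increasing in $i$, the mean of its $m-k$ smallest values is at most the mean of all $m$ values, i.e. $\frac{1}{m-k}\norm{\boldsymbol{\varepsilon}_{\mathcal{T}^c}}_2^2\le\frac{1}{m}\norm{\boldsymbol{\varepsilon}}_2^2$ (equivalently $k\norm{\boldsymbol{\varepsilon}_{\mathcal{T}^c}}_2^2\le(m-k)\norm{\boldsymbol{\varepsilon}_{\mathcal{T}}}_2^2$, which follows by comparing both sides to $k(m-k)\abs{\boldsymbol{\varepsilon}_{k+1}}^2$). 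This produces $\norm{\boldsymbol{\varepsilon}_{\mathcal{T}^c}}_2^2\le\frac{m-k}{m}\norm{\boldsymbol{\varepsilon}}_2^2\le\frac{m-k}{m}\delta^2$.

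Combining the two inequalities yields $\norm{\boldsymbol{\varepsilon}-\boldsymbol{\varepsilon}[k]}_1^2\le(m-k)\norm{\boldsymbol{\varepsilon}_{\mathcal{T}^c}}_2^2\le\frac{(m-k)^2}{m}\delta^2$, and taking square roots gives the claimed bound $\frac{m-k}{\sqrt{m}}\delta$. The only delicate point --- the main obstacle --- is precisely the averaging inequality in the second paragraph: one must resist settling for plain Cauchy--Schwarz on the tail and instead use the ordering to redistribute the $\ell_2$-budget, since that is what produces the tight constant. As a sanity check, the bound is attained by the uniform vector $\boldsymbol{\varepsilon}_i=\delta/\sqrt{m}$, for which both sides equal $\frac{m-k}{\sqrt{m}}\delta$.
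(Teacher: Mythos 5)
Your proof is correct, and it reaches the sharp constant by a genuinely different intermediate inequality than the paper. The paper stays entirely in the $\ell_1$ world: ordering the entries, it shows that the head carries at least a $\tfrac{k}{m}$ fraction of the total $\ell_1$ mass (by comparing both head and tail sums to the boundary entry's magnitude), deduces $\norm{\boldsymbol{\varepsilon}-\boldsymbol{\varepsilon}[k]}_1\le\tfrac{m-k}{m}\norm{\boldsymbol{\varepsilon}}_1$, and only then converts to the hypothesis via the global norm inequality $\norm{\boldsymbol{\varepsilon}}_1\le\sqrt{m}\,\norm{\boldsymbol{\varepsilon}}_2$. You instead apply Cauchy--Schwarz on the tail block and then redistribute the $\ell_2$ \emph{energy}, showing $\norm{\boldsymbol{\varepsilon}_{\mathcal{T}^c}}_2^2\le\tfrac{m-k}{m}\norm{\boldsymbol{\varepsilon}}_2^2$; the two factorizations of the same constant are $\tfrac{m-k}{m}\cdot\sqrt{m}$ versus $\sqrt{m-k}\cdot\sqrt{\tfrac{m-k}{m}}$. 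Both arguments rest on the same ordering observation, and both justify the key redistribution step by comparing head and tail to the $(k{+}1)$-st entry. What the paper's route buys is the standalone $\ell_1$-to-$\ell_1$ bound $\norm{\boldsymbol{\varepsilon}-\boldsymbol{\varepsilon}[k]}_1\le\tfrac{m-k}{m}\norm{\boldsymbol{\varepsilon}}_1$, useful if only an $\ell_1$ budget is available; what yours buys is a direct $\ell_2$-to-$\ell_1$ estimate plus the tightness witness $\boldsymbol{\varepsilon}_i=\delta/\sqrt{m}$, which the paper does not provide and which confirms the constant cannot be improved. Your correct diagnosis that naive Cauchy--Schwarz on the tail gives only $\sqrt{m-k}\,\delta$ is exactly the obstacle both proofs are designed to overcome.
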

\begin{proof}
Without loss of generality, suppose the elements of $\boldsymbol{\varepsilon}$ are ordered as $\abs{\boldsymbol{\varepsilon}_1}\le\abs{\boldsymbol{\varepsilon}_2}\le\hdots\le\abs{\boldsymbol{\varepsilon}_m}$, then
\begin{align*}
    \norm{\boldsymbol{\varepsilon} - \boldsymbol{\varepsilon}[k]}_1 &= \sum\limits_{i=1}^{m-k}{\abs{\boldsymbol{\varepsilon}_i}}\\
                                                                    &\le\sum\limits_{i=1}^{m}{\abs{\boldsymbol{\varepsilon}_i}} - k\abs{\boldsymbol{\varepsilon}_k} = \norm{\boldsymbol{\varepsilon}}_1-k\abs{\boldsymbol{\varepsilon}_k}\\&\le \norm{\boldsymbol{\varepsilon}}_1-\frac{k}{m-k}(m-k)\abs{\boldsymbol{\varepsilon}_k}\\
                                                                    &\le\norm{\boldsymbol{\varepsilon}}_1-\frac{k}{m-k}\sum\limits_{i=1}^{m-k}{\abs{\boldsymbol{\varepsilon}_i}}\\&\le \norm{\boldsymbol{\varepsilon}}_1-\frac{k}{m-k}\norm{\boldsymbol{\varepsilon} - \boldsymbol{\varepsilon}[k]}_1
\end{align*}
From which
\begin{align*}
    \norm{\boldsymbol{\varepsilon} - \boldsymbol{\varepsilon}[k]}_1\le\frac{m-k}{m}\norm{\boldsymbol{\varepsilon}}_1\le\frac{m-k}{\sqrt{m}}\norm{\boldsymbol{\varepsilon}}_2\le\frac{m-k}{\sqrt{m}}\delta
\end{align*}
\end{proof}

\begin{theorem}
Consider the recovery optimization problem in \eqref{eqn:enh_res_est2}. Suppose the unknown true state $\mathbf{x}^*\in\mathbb{R}^n$ is a feasible of the optimization problem. If the range space $\mathcal{R}(H)$ of $H$ satisfies the \emph{subspace property} $\norm{\mathbf{v}_\mathcal{T}}_1\le\gamma\norm{\mathbf{v}_{\mathcal{T}^c}}_1$, $\forall \mathbf{v}\in\mathcal{R}(H), \abs{\mathcal{T}}\le k<m$, then the reconstruction error can be upper bounded as:
\begin{align}\nonumber
     \norm{\hat{\mathbf{x}} - \mathbf{x}^*}_2&\le C_1\textsf{sat}_{\delta(\tau)}\left(C_2\norm{\hat{\mathbf{e}}-\hat{\mathbf{e}}[k] }_1 + C_3\delta(\tau)\right) \\&\hspace{3cm}+ C_1\textsf{sat}_{\delta(\tau)}\left(C_3\delta(\tau)\right),
\end{align}
where $\hat{\mathbf{e}} = \mathbf{y}-H\hat{\mathbf{x}} - \hat{\boldsymbol{\varepsilon}}$ is the objective residual,
\begin{align*}
   \delta(\tau) &= \overline{\Sigma}^{\frac{1}{2}}\chi_m(\tau), \hspace{1mm}
    C_1 = \frac{2}{\underline{{\sigma}}_H},\hspace{1mm}
    C_2 =\frac{1+\gamma}{1-\gamma},\hspace{1mm}\\
    &C_3 = \frac{(1+\gamma)}{(1-\gamma)}\frac{(m-k)}{\sqrt{m}}\overline{\sigma},
\end{align*}
$\underline{{\sigma}}_H$ is the smallest singular value of $H$, and $\overline{\sigma}$ and $\overline{\Sigma}$ are the biggest standard deviations of the auxiliary model and measurement noise statistics respectively.
\end{theorem}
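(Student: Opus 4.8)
The plan is to reduce the state-error bound to a range-space residual bound where the subspace property and Theorem~\ref{thm:main_result} can be brought to bear, and to account for the measurement noise separately through Lemma~\ref{lemma:kth_term_error_bound}. Write $\Delta\mathbf{x} = \hat{\mathbf{x}} - \mathbf{x}^*$ and let $\boldsymbol{\varepsilon}^*,\hat{\boldsymbol{\varepsilon}}$ be the true/estimated noise with residuals $\mathbf{e}^* = \mathbf{y} - H\mathbf{x}^* - \boldsymbol{\varepsilon}^*$ and $\hat{\mathbf{e}} = \mathbf{y} - H\hat{\mathbf{x}} - \hat{\boldsymbol{\varepsilon}}$. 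Since $H$ has full column rank, the first step is the chain $\norm{\Delta\mathbf{x}}_2 \le \frac{1}{\underline{\sigma}_H}\norm{H\Delta\mathbf{x}}_2 \le \frac{1}{\underline{\sigma}_H}\norm{H\Delta\mathbf{x}}_1$, where the last inequality is the standard $\norm{\cdot}_2\le\norm{\cdot}_1$. This accounts for the $1/\underline{\sigma}_H$ half of the prefactor $C_1 = 2/\underline{\sigma}_H$, the remaining factor of $2$ coming from the recovery bound below. It therefore suffices to bound $\norm{H\Delta\mathbf{x}}_1$.

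Second, I would subtract the two measurement identities to obtain $H\Delta\mathbf{x} = (\mathbf{e}^* - \hat{\mathbf{e}}) + (\boldsymbol{\varepsilon}^* - \hat{\boldsymbol{\varepsilon}})$, and observe the crucial fact that $H\Delta\mathbf{x} \in \mathcal{R}(H)$, so it obeys the subspace property $\norm{(H\Delta\mathbf{x})_\mathcal{T}}_1 \le \gamma\norm{(H\Delta\mathbf{x})_{\mathcal{T}^c}}_1$ for every $\abs{\mathcal{T}} \le k$. Mirroring the proof of Theorem~\ref{thm:main_result}, I would combine this with the optimality inequality $\norm{\hat{\mathbf{e}}}_1 \le \norm{\mathbf{e}^*}_1$ (valid because $(\mathbf{x}^*,\boldsymbol{\varepsilon}^*)$ is feasible and $(\hat{\mathbf{x}},\hat{\boldsymbol{\varepsilon}})$ optimal) together with triangle inequalities on the complement index set. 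The $(1-\gamma)$ denominator is produced exactly as in \eqref{eqn:claim4}--\eqref{eqn:claim6}, by moving a $\gamma\norm{\cdot_{\mathcal{T}^c}}_1$ term to the left-hand side, which yields the factor $C_2 = \frac{1+\gamma}{1-\gamma}$ and isolates $\norm{\hat{\mathbf{e}}-\hat{\mathbf{e}}[k]}_1$ when $\mathcal{T}$ is chosen as the best-$k$ support.

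Third, the noise differences $\boldsymbol{\varepsilon}^* - \hat{\boldsymbol{\varepsilon}}$ are precisely what prevent a verbatim reuse of Theorem~\ref{thm:main_result}: the perturbation that lies in $\mathcal{R}(H)$ is the full sum $(\mathbf{e}^* - \hat{\mathbf{e}}) + (\boldsymbol{\varepsilon}^* - \hat{\boldsymbol{\varepsilon}})$, not the residual difference alone. I would isolate the noise by splitting its restriction to $\mathcal{T}^c$ and bounding the resulting best-$k$-term tails $\norm{\hat{\boldsymbol{\varepsilon}} - \hat{\boldsymbol{\varepsilon}}[k]}_1$ and $\norm{\boldsymbol{\varepsilon}^* - \boldsymbol{\varepsilon}^*[k]}_1$ via Lemma~\ref{lemma:kth_term_error_bound}, each of which is at most $\frac{m-k}{\sqrt{m}}$ times the noise radius fixed by the feasibility ellipsoid (the constraint $\norm{\boldsymbol{\varepsilon}}_{\Sigma_\varepsilon^{-1}}^2 \le \chi_m^2(\tau)$ forces $\norm{\boldsymbol{\varepsilon}}_2 \le \delta(\tau)$). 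Scaled by $\frac{1+\gamma}{1-\gamma}$, these tails assemble into the additive $C_3\delta(\tau)$ contributions appearing inside the saturations.

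Finally, the saturation at level $\delta(\tau)$ and the appearance of two separate $\textsf{sat}$ terms arise by taking a minimum with the absolute bound furnished by the feasibility constraints, exactly the mechanism that converted \eqref{eqn:claim6} into a $2\,\textsf{sat}_\delta(\cdot)$ expression in Theorem~\ref{thm:main_result}. The estimated side retains the genuine compressibility term $C_2\norm{\hat{\mathbf{e}} - \hat{\mathbf{e}}[k]}_1$, whereas the true side loses its compressibility term because the true attack is $k$-sparse ($\norm{\mathbf{e}^*}_0 \le k$, hence $\norm{\mathbf{e}^* - \mathbf{e}^*[k]}_1 = 0$), leaving only the noise contribution $\textsf{sat}_{\delta(\tau)}(C_3\delta(\tau))$; together these produce the two stated terms. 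The main obstacle I anticipate is the bookkeeping of the noise across the subspace split, and the reconciliation of the two natural index sets, namely the best-$k$ support of $\hat{\mathbf{e}}$ versus the support of $\mathbf{e}^*$, so that the noise tails are correctly routed through Lemma~\ref{lemma:kth_term_error_bound} and the proper feasibility constraint supplies the saturation radius.
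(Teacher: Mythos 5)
Your first and last steps match the paper exactly: $\norm{\hat{\mathbf{x}}-\mathbf{x}^*}_2\le\frac{1}{\underline{\sigma}_H}\norm{H(\hat{\mathbf{x}}-\mathbf{x}^*)}_1$ and the identification $H(\hat{\mathbf{x}}-\mathbf{x}^*)=(\mathbf{e}^*+\boldsymbol{\varepsilon}^*)-(\hat{\mathbf{e}}+\hat{\boldsymbol{\varepsilon}})$. But the middle of your argument is structurally different from the paper's, and as sketched it does not arrive at the stated bound. The paper does \emph{not} rerun the proof of Theorem~\ref{thm:main_result} on the combined perturbation. Instead it introduces an auxiliary reduced problem $(\hat{P})$, namely $\min\norm{\mathbf{e}}_1$ subject to $\mathbf{y}-\mathbf{e}\in\mathcal{R}(H)\cap\mathcal{X}$, with minimizer $\hat{\mathbf{e}}_2$, observes that \emph{both} $\mathbf{e}^*+\boldsymbol{\varepsilon}^*$ and $\hat{\mathbf{e}}+\hat{\boldsymbol{\varepsilon}}$ are feasible for $(\hat{P})$, applies Theorem~\ref{thm:main_result} twice as a black box (once per feasible point, with Lemma~\ref{lemma:kth_term_error_bound} supplying the noise tails), and joins the two bounds by the triangle inequality through $\hat{\mathbf{e}}_2$. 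That pivot is precisely what produces a \emph{sum of two separate} saturation terms and what makes the term $\norm{\hat{\mathbf{e}}-\hat{\mathbf{e}}[k]}_1$ appear: it is the compressibility of the feasible point $\hat{\mathbf{e}}+\hat{\boldsymbol{\varepsilon}}$ of $(\hat{P})$, not something you extract from the optimality of $\hat{\mathbf{e}}$ in the original problem.

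The concrete gap in your route is the mismatch between the optimality inequality you have, $\norm{\hat{\mathbf{e}}}_1\le\norm{\mathbf{e}^*}_1$, and the vector that actually lies in $\mathcal{R}(H)$, which is $(\mathbf{e}^*-\hat{\mathbf{e}})+(\boldsymbol{\varepsilon}^*-\hat{\boldsymbol{\varepsilon}})$. You name this obstacle, but the proposed fix (splitting the noise over $\mathcal{T}^c$ and routing the tails through Lemma~\ref{lemma:kth_term_error_bound}) does not reconstruct the claimed constants: if you carry the cancellation argument of \eqref{eqn:claim1}--\eqref{eqn:claim6} through with $\mathcal{T}=\supp(\mathbf{e}^*)$, the noise enters as the full difference $\norm{\boldsymbol{\varepsilon}^*-\hat{\boldsymbol{\varepsilon}}}_1$ (controlled only via $\sqrt{m}\norm{\cdot}_2$, not via a best-$k$-term tail), the $\norm{\hat{\mathbf{e}}-\hat{\mathbf{e}}[k]}_1$ term never appears, and only a single saturation is available from feasibility. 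You would obtain \emph{a} bound, quite possibly a legitimate one of a different shape, but not the theorem as stated; your closing claim that the two displayed terms fall out ``together'' from one pass is the step that is not justified. To close the proof as written, introduce $(\hat{P})$ and $\hat{\mathbf{e}}_2$ and make the two invocations of Theorem~\ref{thm:main_result} explicit.
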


\begin{proof}
Define the sets $\mathcal{X}, \mathcal{X}_\varepsilon\subset\mathbb{R}^m$ as
\begin{align*}
    \mathcal{X}(\mathbf{z}) &\triangleq \left\{\mathbf{y}\in\mathbb{R}^m\hspace{1mm}:\hspace{1mm}\norm{\mathbf{y}-\boldsymbol{\mu}(\mathbf{z})}_{\Sigma^{-1}(\mathbf{z})}^2\le\chi_m^2(\tau)\right\}\\
    \mathcal{X}_\varepsilon&\triangleq\left\{\boldsymbol{\varepsilon}\in\mathbb{R}^m\hspace{1mm}:\hspace{1mm}\norm{\boldsymbol{\varepsilon}}_{\Sigma_{\boldsymbol{\varepsilon}}^{-1}}^2\le\chi_m^2(\tau)\right\}.
\end{align*}
Thus, the optimization problem in \eqref{eqn:enh_res_est2} can be expressed as:
\begin{align}\label{eqn:enh_res_est_P}\tag{$P$}
\begin{array}{ll}
\Minimize & \left\|\mathbf{e}\right\|_1 \\
\SubjectTo&\\
        &\begin{array}{rl}
          \mathbf{y}-\mathbf{e}-\boldsymbol{\varepsilon}&\in\mathcal{R}(H)\\
          \mathbf{y}-\mathbf{e}&\in\mathcal{X}(\mathbf{z})\\
          \boldsymbol{\varepsilon}&\in\mathcal{X}_{\boldsymbol{\varepsilon}}.
        \end{array}
\end{array}
\end{align}
Also, consider the reduced problem
\begin{align}\label{eqn:enh_res_est_P_hat}\tag{$\hat{P}$}
\begin{array}{ll}
\Minimize & \left\|\mathbf{e}\right\|_1 \\
\SubjectTo&\\
        &\mathbf{y}-\mathbf{e}\in\mathcal{R}(H)\cap\mathcal{X}.
\end{array}
\end{align}
Let 
\begin{itemize}
    \item $\mathbf{e}^*\in\mathbb{R}^m,\hspace{2mm} \norm{\mathbf{e}^*}_0=k$ and $\boldsymbol{\varepsilon}^*\in\mathbb{R}^m$ be the unknown actual attack vector and noise instance respectively,
    \item $\hat{\mathbf{e}}, \hat{\boldsymbol{\varepsilon}}\in\mathbb{R}^m$ be the minimal points of the optimization problem in \eqref{eqn:enh_res_est_P}, and
    \item $\hat{\mathbf{e}}_2\in\mathbb{R}^m$ be the solution of the reduced problem in \eqref{eqn:enh_res_est_P_hat}.
\end{itemize}
Using the result in Theorem~\ref{thm:main_result}, the observation that $\mathbf{e}^*+\boldsymbol{\varepsilon}^*$ and $\hat{\mathbf{e}} + \hat{\boldsymbol{\varepsilon}}$ are feasible points of \eqref{eqn:enh_res_est_P_hat} and Lemma~\ref{lemma:kth_term_error_bound}, yield:
\begin{align*}
    \norm{\hat{\mathbf{e}}_2 - \mathbf{e}^* - \boldsymbol{\varepsilon}^*}_1&\le 2\textsf{sat}_\delta\left(\frac{1+\gamma}{1-\gamma}\norm{\boldsymbol{\varepsilon}^* - \boldsymbol{\varepsilon}[k]^*}_1\right)\\&\le2\textsf{sat}_\delta\left(\frac{(1+\gamma)(m-k)\bar{\Sigma}}{(1-\gamma)\sqrt{m}}\delta\right),
\end{align*}
with $\delta = \overline{\Sigma}^{\frac{1}{2}}\chi_m(\tau)$. Using the left-hand-side triangular inequality, the above inequality implies that:
\begin{align*}
    &\norm{\hat{\mathbf{e}} + \hat{\boldsymbol{\varepsilon}} - \mathbf{e}^* - \boldsymbol{\varepsilon}^*}_1\le \norm{\hat{\mathbf{e}}_2 - \mathbf{e}^* - \boldsymbol{\varepsilon}^*}_1 + \norm{\hat{\mathbf{e}}_2 - \hat{\mathbf{e}} - \hat{\boldsymbol{\varepsilon}}}_1\\
    &\le2\textsf{sat}_\delta\left(\frac{(1+\gamma)(m-k)\bar{\Sigma}}{(1-\gamma)\sqrt{m}}\delta\right)\\&\hspace{1cm} + 2\textsf{sat}_\delta\left(\frac{1+\gamma}{1-\gamma}\norm{\hat{\mathbf{e}}-\hat{\mathbf{e}}[k] + \hat{\boldsymbol{\varepsilon}} - \hat{\boldsymbol{\varepsilon}}[k]}_1\right)\\
    &\le2\textsf{sat}_\delta\left(\frac{(1+\gamma)(m-k)\bar{\Sigma}}{(1-\gamma)\sqrt{m}}\delta\right) \\&\hspace{5mm}+ 2\textsf{sat}_\delta\left(\frac{1+\gamma}{1-\gamma}\norm{\hat{\mathbf{e}}-\hat{\mathbf{e}}[k] }_1 + \frac{(1+\gamma)(m-k)\bar{\Sigma}}{(1-\gamma)\sqrt{m}}\delta\right).
\end{align*}
Expressing the right-hand-side of the last inequality in the ``language" of the original problem in \eqref{eqn:enh_res_est2} yields
\begin{align*}
    &\norm{H(\hat{\mathbf{x}} - \mathbf{x}^*)}_1\le2\textsf{sat}_\delta\left(\frac{(1+\gamma)(m-k)\bar{\Sigma}}{(1-\gamma)\sqrt{m}}\delta\right) \\&\hspace{1cm}+ 2\textsf{sat}_\delta\left(\frac{1+\gamma}{1-\gamma}\norm{\hat{\mathbf{e}}-\hat{\mathbf{e}}[k] }_1 + \frac{(1+\gamma)(m-k)\bar{\Sigma}}{(1-\gamma)\sqrt{m}}\delta\right),
\end{align*}
where $\hat{\mathbf{x}} - \mathbf{x}^*$ is the resulting state estimation error, which is consequently bounded as:
\begin{align*}
    \norm{\hat{\mathbf{x}} - \mathbf{x}^*}_2\le C_1\textsf{sat}_\delta\left(C_2\norm{\hat{\mathbf{e}}-\hat{\mathbf{e}}[k] }_1 + C_3\delta\right) + C_1\textsf{sat}_\delta\left(C_3\delta\right)
\end{align*}
\end{proof}

\section{Numerical Example: Power system state estimation with data-driven economic auxiliary model}\label{s:experiments}
In this numerical simulation example, a resilient state estimation algorithm based on the optimization problem in \eqref{eqn:enh_res_est2} is developed and evaluated on the IEEE 14-bus test case mapped to actual data from the the New York Independent System Operator (NYISO). For this application, the prior information is obtained from a GPR mapping from some energy market information to an \emph{iid} Gaussian distributions on the system measurements. This example first appeared in our earlier work\cite{anubi2019enhanced}. Interested readers are directed to that paper for more details. In what follows, we only provide an overview to strengthen the theoretical results of the previous sections. 

\subsection{Setup}\label{ss:setup}
The IEEE 14-bus system, shown in Fig. \ref{fig:IEEE14bus},  represents a simple approximation of the American electric power system as of February 1962. It has 14 buses, 5 generators, and 11 loads. The system has 27 state variables which are the voltage angles and voltage magnitudes of the buses, with the first bus angle chosen as the reference one. The buses/nodes of the power grid model are assumed to be supported with IIoT measurement sensors such as remote terminal units (RTUs) able to provide bus-related measurements of active and reactive power injection and flow. 

Simulation experiments are performed using the actual load data of New York state as provided by NYISO\cite{NYISO_data}. Specifically, five-minute load data of NYISO for 3 months (between January and March) in 2017 and 2018 are used. Furthermore, each region of the NYISO map, shown in Fig. \ref{fig:NYISO}, is mapped in an ascending order with every load bus of IEEE 14 system, i.e. using the following mapping: $[2\rightarrow1,~3\rightarrow2,~4\rightarrow3,~5\rightarrow4,~6\rightarrow5,~9\rightarrow6,~10\rightarrow7,~11\rightarrow8,~12\rightarrow9,~13\rightarrow10,~14\rightarrow11] $, where the first element show the load bus of IEEE 14 case the second the region of NYISO, e.g., bus 2 to region A-WEST, bus 3 to region B-GENESE, bus 4 to region C-CENTRL, etc. By this, we were able to create realistic attack data to validate the earlier theoretical claims.

\begin{figure}[t]
    \centering
    \begin{subfigure}{0.5\textwidth}
        \centering{\includegraphics[width=3.1in]{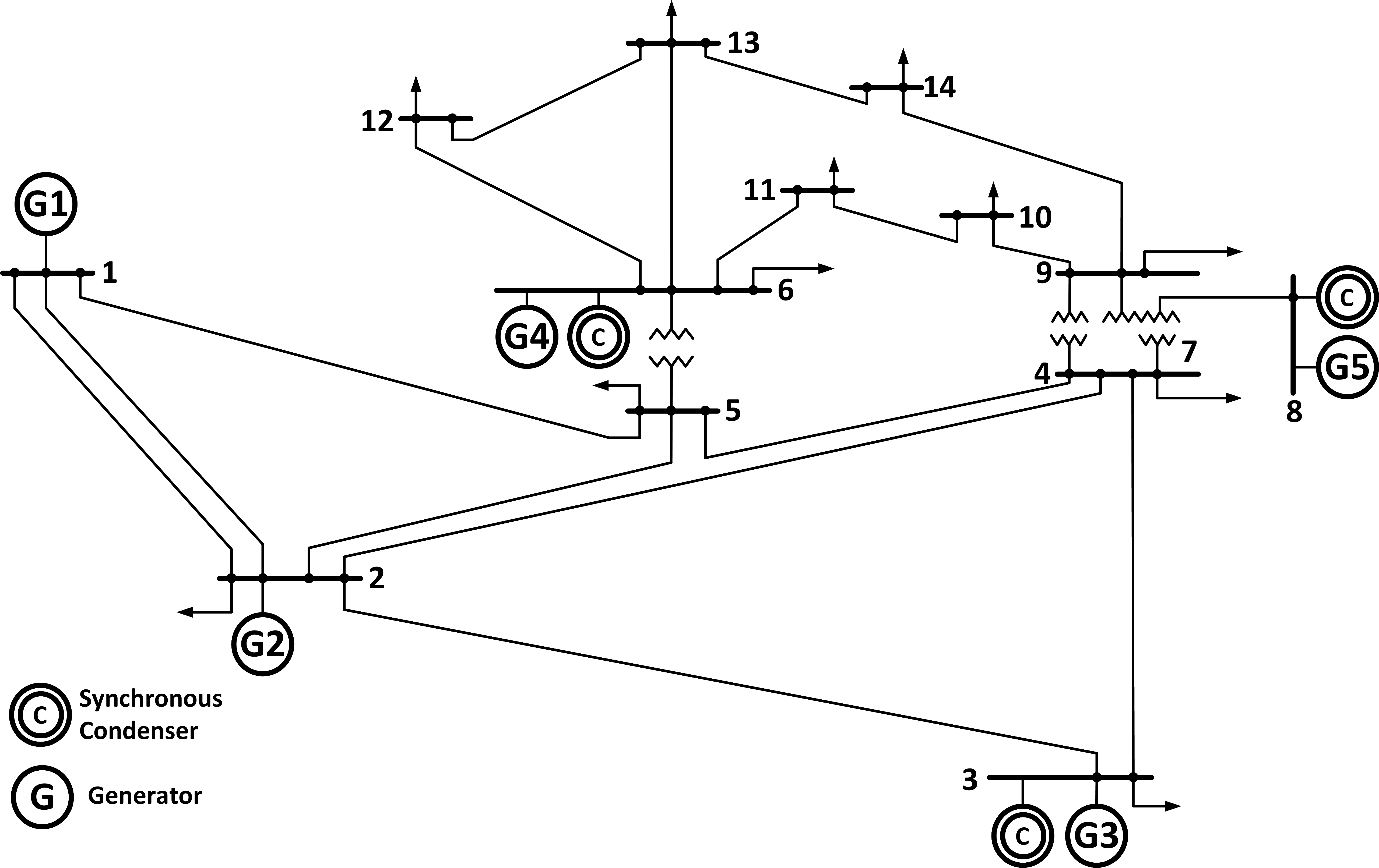}}
        \caption{IEEE 14-bus system.}
        \label{fig:IEEE14bus}
    \end{subfigure}%
    \par\bigskip\bigskip
    \begin{subfigure}{0.5\textwidth}
        \centering{\includegraphics[width=3.1in]{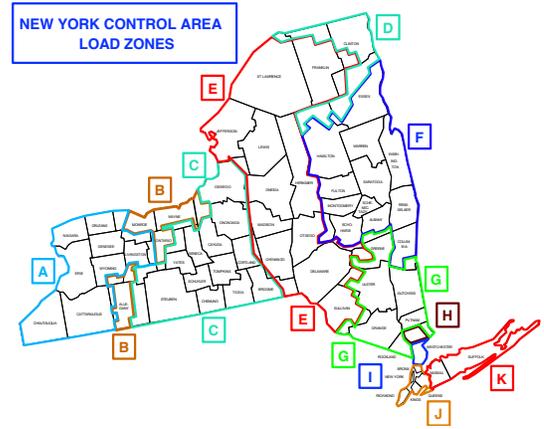}}
        \caption{NYISO map of the 11 control area load zones}
        \label{fig:NYISO}
    \end{subfigure}
    \caption{IEEE 14-bus system mapped into NYISO control area load zones data.}
\end{figure}

\subsection{Auxiliary model}\label{sec:auxiliary_model}
From the collected NYSIO historical load and market data, we built a Gaussian Process Regression (GPR) model which maps from locational bus marginal prices to bus voltages and angle measurements. This, as shown in previous sections, provides an added layer of redundancy for boosting system resiliency to arbitrary data corruption. A Gaussian Process (GP) is a collection (possibly infinite) of continuous random variables $\mathcal{G}$, any finite subset of which are jointly Gaussian.  GPR uses GPs to encode prior distributions over functions\footnote{In this case will be functions from auxiliary measurements to observed measurements.}. The priors are then updated to form posterior distributions when new data is collected. For a comprehensive introduction to GP and GPR, and their applications for learning and control, the readers are directed to \cite{rasmussen2006gaussian} and a recent survey in \cite{Liu2018Gaussian}.

Consider a dataset $\mathcal{D} = \left\{\mathbf{Z},\mathbf{Y}\right\}$, where $\mathbf{Z}\in\mathbb{R}^{p\times N}$ is a matrix containing the values of the auxiliary variables column-wise, $\mathbf{Y}\in\mathbb{R}^{m\times N}$ are the corresponding sensor measurement values and $N$ is the number of datapoint in the dataset. The goal is to learn an implicit mapping $f:\mathbb{R}^p\mapsto\mathbb{R}^m$ for which

\begin{align}
    \mathbf{y}_i = f(\mathbf{z}_i) + \boldsymbol{\varepsilon},\hspace{2mm} i=1,\hdots N,
\end{align}

\noindent where $\boldsymbol{\varepsilon}\sim\mathcal{N}(\mathbf{0},\textsf{diag}(\sigma_1^2,\hdots,\sigma_m^2))$. In theory, without any further restriction, the problem is ill-defined because there are potentially many possible functions that explains the data exactly notwithstanding the measurement noise. As a means of regularization, the class of functions for consideration is refined by the restriction $f(\mathbf{z})\sim\mathcal{GP}(m(\mathbf{z}),k(\mathbf{z},\mathbf{z}'))$ to a GP completely specified by its mean and covariance functions\footnote{Also known as kernels.}

\begin{align}
    \mu(\mathbf{z}) &\triangleq \mathbb{E}[f(\mathbf{z})]\\
    k(\mathbf{z},\mathbf{z}') &\triangleq \mathbb{E}[(f(\mathbf{z})-\mu(
    \mathbf{z}))(f(\mathbf{z'})-\mu(
    \mathbf{z}'))].
\end{align}

\noindent The covariance function can then be specified apriori without an explicit probability distribution. This is where the prior (possibly knowledge-based) information is encoded in the GP. While any positive definite function may pass for a covariance function, one commonly used  is the squared exponential covariance function:

\begin{align}
    k(\mathbf{z},\mathbf{z}') = A\exp\left(-\frac{1}{2l}\norm{\mathbf{z}-\mathbf{z}'}_2^2\right),
\end{align}

\noindent where hyperparameters $A$ and $l$ implicitly define a smoothness-promoting prior. Given a query point $\mathbf{z}\in\mathbb{R}^p$ for the auxiliary variable, the posterior distribution for the $j$th sensor values is $p(y_j|\mathbf{z},\mathcal{D}) = \mathcal{N}(\mu_j(\mathbf{z}),\Sigma_j(\mathbf{z}))$, with the mean and covariance function given by

\begin{align}
    \mu_j(\mathbf{z}) &= \mathbf{k}(\mathbf{z})^\top\left(K+\sigma_j^2 I\right)^{-1}\mathbf{Y}_j^\top,\\
    \Sigma_j(\mathbf{z})&= k(\mathbf{z},\mathbf{z}) - \mathbf{k}(\mathbf{z})^\top\left(K+\sigma_j^2 I\right)^{-1}\mathbf{k}(\mathbf{z}),\hspace{2mm}j=1,\hdots,m
\end{align}

\noindent where $K\in\mathbb{R}^{N\times N}$ is a covariance matrix with entries $K_{ij}=k(\mathbf{z}_i,\mathbf{z}_j)$ and $\mathbf{k}(\mathbf{z})\in\mathbb{R}^N$ is a vector with entries $\mathbf{k}(\mathbf{z})_i = k(\mathbf{z},\mathbf{z}_i)$.

The overall sensor values posterior distribution is given by:

\begin{align}
    p(\mathbf{y}|\mathbf{z},\mathcal{D}) &= \prod_{j=1}^m \mathcal{N}(\mu_j(\mathbf{z}),\Sigma_j(\mathbf{z}))\\
                                         &=\mathcal{N}(\mu(\mathbf{z}),\Sigma(\mathbf{z})),
\end{align}

\noindent where

\begin{align*}
    \mu(\mathbf{z}) = \left[\begin{array}{c}\mu_1(\mathbf{z})\\\vdots\\\mu_m(\mathbf{z})\end{array}\right]\text{ and }
    \Sigma(\mathbf{z})= \left[\begin{array}{ccc}\Sigma_1(\mathbf{z})&&\\&\ddots&\\&&\Sigma_m(\mathbf{z})\end{array}\right]
\end{align*}

\subsection{Solution Algorithm}
In addition to the nice reconstruction property of the $1$-norm relaxation, Iteratively re-weighted algorithms \cite{candes2008enhancing, chartrand2008iterative} have been demonstrated to be a highly effective way of approximating the solution of the nonconvex problem with successive convex problems. In particular, for the solution of the problem in \eqref{eqn:enh_res_est}, the re-weighted $1$-norm minimization scheme of \cite{candes2008enhancing} is employed to give even stronger reconstruction algorithm.

Consider the operator $\mathcal{P}:\mathbb{R}^m\times\mathbb{R}^p\times\mathbb{R}^{m\times m}\mapsto \mathbb{R}^{n+m}$, where

\begin{align}
    \hat{\mathbf{x}}(W),\hspace{2mm}\hat{\boldsymbol{\varepsilon}}(W) =  \mathcal{P}(\mathbf{y},\mathbf{z},W)
\end{align}

\noindent are given by the minimizers of the convex program:

\begin{align}\label{eqn:enh_res_est3}
\begin{array}{lc}
\Minimize & \left\|W\left(\mathbf{y}-H\mathbf{x}-\boldsymbol{\varepsilon}\right)\right\|_1 \\
\SubjectTo&\\
        &\begin{array}{rl}
          \norm{H\mathbf{x}+\boldsymbol{\varepsilon}-\mu(\mathbf{z})}_{\Sigma^{-1}(\mathbf{z})}^2 & \le \chi^2_{n_c}(\tau)\\
          \norm{\boldsymbol{\varepsilon}}_{\Sigma_\varepsilon^{-1}}^2 & \le\chi^2_{m}(\tau),
        \end{array}
\end{array}
\end{align}

\noindent Using this, the algorithm for the enhance state estimator is outlined in Algorithm~\ref{alg:enh_state_est}.

\begin{algorithm}[htbp]

\caption{Resilient Optimal State Estimation Algorithm Using Re-weighted $1$-norm minimization}\label{alg:enh_state_est}
\begin{algorithmic}
\Procedure{Offline}{}
	\State $\mathcal{D} \gets \left\{\mathbf{Z},\mathbf{Y}\right\}$\Comment{Dataset sparsification}
	\State $K\gets k(\mathbf{Z},\mathbf{Z})$\Comment{Kernel matrix}
	\State $\Sigma_\varepsilon,A,l \gets$\Comment{Hyperparameters initialization,}
\EndProcedure

\Procedure{Collect Data}{}
	\State $\mathbf{y}\gets$\Comment{Sensor measurements at the current instant}
	\State $\mathbf{z}\gets$\Comment{Auxiliary measurements at the current instant}
\EndProcedure

\Procedure{Update Models}{}
    \State $H\gets$\Comment{Model-based. See Sub-section~\ref{ss:setup} for details}
    \For{$j = 1$ to $m$}\Comment{Data-driven posterior}
        \State $\mu_j \gets \mathbf{k}(\mathbf{z})^\top\left(K+\sigma_j^2 I\right)^{-1}\mathbf{Y}_j^\top,$\Comment{Mean}
        \State $\Sigma_j\gets k(\mathbf{z},\mathbf{z}) - \mathbf{k}(\mathbf{z})^\top\left(K+\sigma_j^2 I\right)^{-1}\mathbf{k}(\mathbf{z}),\hspace{2mm}$\Comment{Covariance}
    \EndFor
\EndProcedure

\Procedure{Re-weighted $1$-norm minimization}{$\mathbf{y}$,$\mathbf{z}$}
    \State $W\triangleq \textsf{diag}[w_1,\hdots,w_m]\gets I$
    \State $l\gets0$\Comment{Iteration count}
    \While{\texttt{not converged and }$l\le l_{max}$}
    \State $\hat{\mathbf{x}}^l,\hspace{2mm}\hat{\boldsymbol{\varepsilon}}^l \gets  \mathcal{P}(\mathbf{y},\mathbf{z},W)$\Comment{$\ell_1$ minimization}
    \State $\mathbf{r}\gets \mathbf{y}-H\hat{\mathbf{x}}^l - \hat{\boldsymbol{\varepsilon}}^l$\Comment{residual}
    \For{$j=1$ to $m$}\Comment{weights update}
        \State $w_j\gets\frac{1}{\abs{\mathbf{r}_j}+\delta}$
    \EndFor
    $l\gets l+1$\Comment{increment counter}
    \EndWhile\label{euclidendwhile}
    \State \textbf{return} $\hat{\mathbf{x}}^l,\hat{\boldsymbol{\varepsilon}}^l$\Comment{State estimate is $\hat{\mathbf{x}}^l$}
   
\EndProcedure
\end{algorithmic}

\end{algorithm}

\subsection{Results}
The enhanced resilient estimation algorithm in Algorithm~\ref{alg:enh_state_est} was implemented and ran for data collected every five minutes in a simulation environment. The process begins with the auxiliary measurements $\mathbf{z}=\left[\begin{array}{ccc}z_\text{lbmp}&z_\text{mcl}&z_\text{mcc}\end{array}\right]$, which are actual data downloaded from the respective nodes of the NYISO transmission grid. Here, $z_\text{lbmp}$ is the \texttt{locational bus marginal prices} (\$/MWh), $z_\text{mcl}$ is the \texttt{marginal cost loses} (\$/MWh) and $z_\text{mcc}$ is the \texttt{marginal cost congestion} (\$/MWh). Next, the trained GPR model is executed to give the mean $\boldsymbol{\mu}(\mathbf{z})$ and the covariance $\Sigma(\mathbf{z})$ of the data-driven auxiliary model. Two kinds of FDIA generation were used in the simulation. For the first kind, attack vectors are generated to bias selected measurements locations by 500\% of its true value along a randomly chosen direction. For the second kind, the attack vectors $\mathbf{y}_a$ are systematically generated to result in a specified bias in the state estimation at targeted state variables. 

\begin{figure}[t]
    \centering{\includegraphics[width=3in]{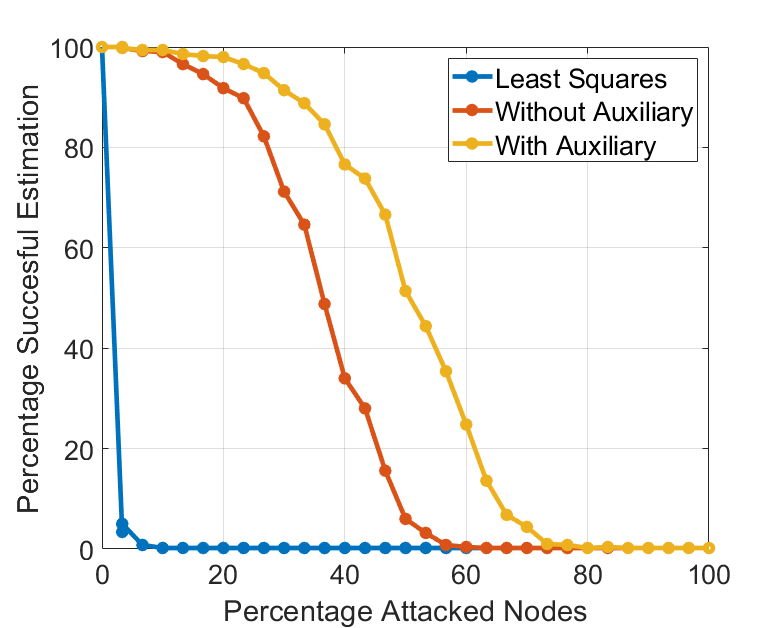}}
    \caption{Simulation results for targeted sensor measurements.\\ Attack vectors are generated to bias select measurements locations by 500\% of its true value along a randomly chosen direction. Plots is the percentage of successful estimations vs. the percentage of attacked sensor nodes.}
    \label{fig:results_target_measurements}
\end{figure}

\begin{figure}[t]
    \centering{\includegraphics[width=3.4in]{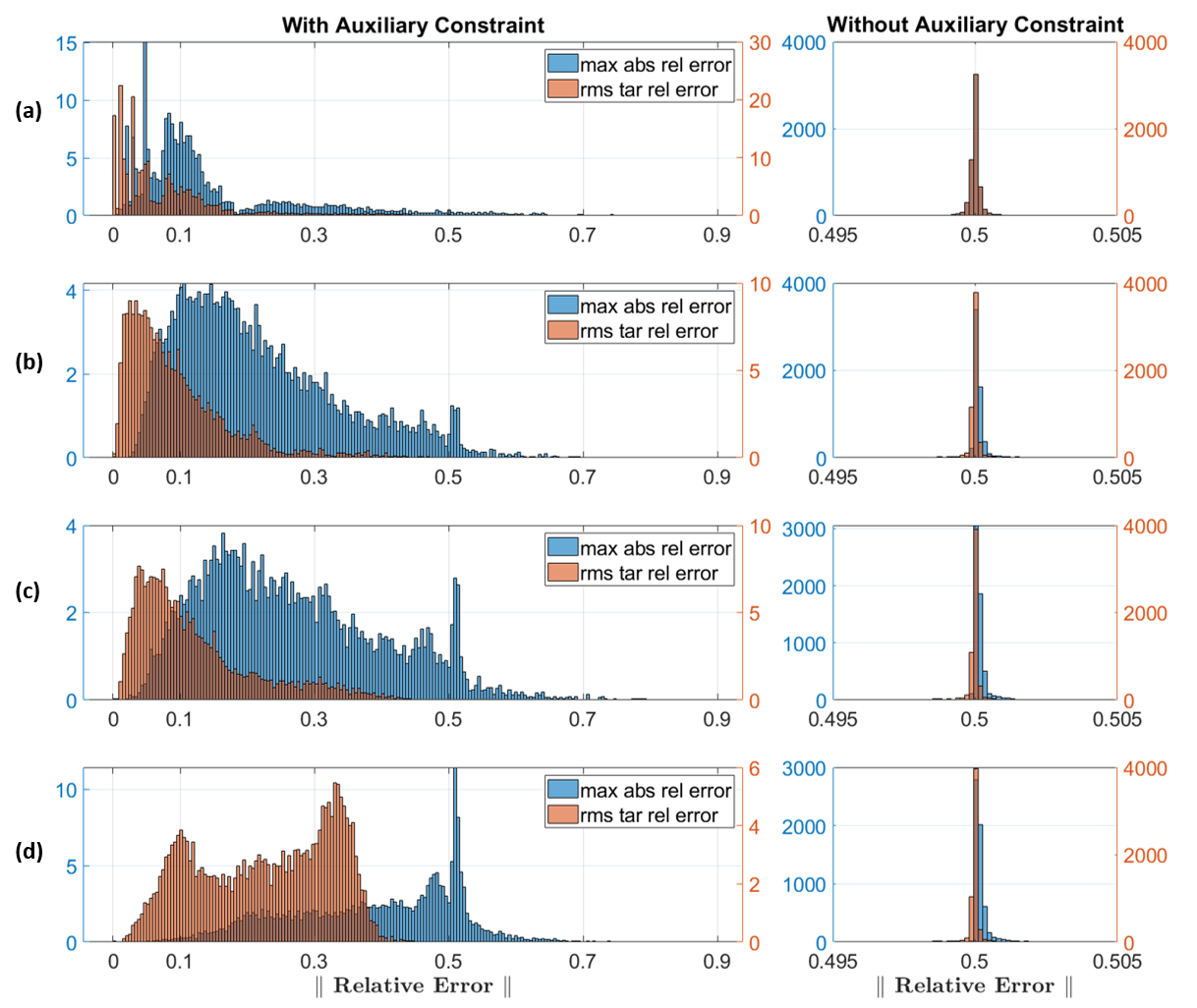}}
    \caption{Simulation results for targeted state FDIA.\\ Attack vectors are generated to bias particular state variables by 50\%. Plotted are the distribution of the \texttt{rms} values of relative errors for targeted states and maximum absolute relative error over all state variables. Subplots: (a) 1 targeted state variable, (b) 5 targeted state variables (c) 10 targeted state variables, (d) 20 targeted state variables.}
    \label{fig:results_target}
\end{figure}

Fig. \ref{fig:results_target_measurements} and Fig. \ref{fig:results_target} show the performance of the proposed algorithm, compared with other standard methods in literature, to the two kinds of FDIA described above. 

For the first set of results, three different state estimation algorithms are simulated against a FDIA directed at specific measurement locations. The three algorithms are: \textit{1)} standard least squares ($\hat{\mathbf{x}} = \argmin\norm{\mathbf{y}-H\mathbf{x}}_2^2$), \textit{2)}  re-weighted $\ell_1$ without the auxiliary model constraint and \textit{3)}  the proposed re-weighted $\ell_1$ with auxiliary model constraint. There are 109 load flow measurements in the simulation. Each simulated scenario, circle points in Fig.~\ref{fig:results_target_measurements}, examines 200 simulations (per state estimation method) with random combinations of sensor locations having fixed percentage (\texttt{x-axis} values) of sensor nodes under attack. 

For the second set of results, the attacks were created in the range space of the system Jacobian matrices. It is well known (e.g., \cite{liu2011false}) that both unconstrained methods will behave similarly under this class of attacks. Thus, we restrict our comparison only to the re-weighted $\ell_1$ algorithms -- one with auxiliary constraints and the other without. Fig.~\ref{fig:results_target} shows the simulations results for four different cases with different numbers of targeted state variables. Fig.~\ref{fig:results_target} shows two plots for each case side-by-side -- one with auxiliary constraints and the other without. Each plot contains the distributions of the maximum absolute relative error, as well as the root-mean-square (\texttt{rms}) values of the relative error for the targeted states. As can be seen from the figures, re-weighted $\ell_1$ algorithms without auxiliary constraints, even though significantly outperforms least-squares based methods in general, are not resilient against state-targeted FDIA.

The proposed re-weighted $\ell_1$ with auxiliary constraints shows significant improvement for both performance indicators. Noticeable effects of the state-targeted FDIA begin to appear when 10 or more states are targeted. This requires compromising more or less 85\% of the system measurement, a feat that demands tremendous amount of resources from any malicious actor. 
\section{Conclusion and future Work}\label{s:conclusions}
In this paper, we showed that incorporation of prior measurement information can significantly improve the resiliency of optimal state estimation algorithms. In particular, we proved that certain prior set inclusion constraints results in much stronger reconstruction error bound. The problem is formulated as a constrained compressive sensing problem and standard results were extended to prove the main results. In addition, numerical simulations were used to validate the theoretical claims by developing a re-weighted $\ell_1$ minimization-base resilient state estimation algorithm for power systems in which data acquired from various IIOT sensors and devices are poisoned with false data injection attacks. The particular case tested is the IEEE 14-bus system mapped to actual NYISO load data. Thus, by corroborating the state estimation with prior auxiliary model, we have demonstrated that it is possible to make it much more difficult to attack a CPS just by corrupting portion of its sensor measurements.

Our future work will aim to extend the theoretical and algorithmic developments in this paper to:
\begin{itemize}
    \item incorporate additional auxiliary information in the estimation, as well as evaluate the developed algorithms through digital real-time simulation platforms using both simulated and field data
    \item the dynamic case using multiple event-triggered auxiliary models
    \item apply the results to the distributed resilient state estimators and moving horizon estimators.
    \item the nonlinear case via infinite-dimensional compressive sensing in Banach space.
\end{itemize}
Moreover, there are interesting theoretical questions that remain open; For instance, what is the resulting stability assessments and margins of the resulting closed loop system when the resilient estimator is used as a dynamic filter, whereby the estimated states are fed into the underlying controller(s)?. An answer to these questions, and likes, will help us judge the quality of an auxiliary model required to achieve a given success rate. Finally, we aim to apply this approach to more examples of CPSs.


%



\section*{Acknowledgment}
Authors are grateful to the Florida State University (FSU) Council on Research and Creativity (CRC) for funding this effort through the First Year Assistant Professor Award Program (FYAP) \#043354

\ifCLASSOPTIONcaptionsoff
  \newpage
\fi



%
\bibliographystyle{IEEEtran}
\bibliography{refs}

%








\end{document}